\newcommand{\R}{\mathbb{R}}
\newcommand{\y}{\bm{y}}
\newtheorem{remark}{Remark}
\newtheorem{theorem}{Theorem}
\title{New time domain decomposition methods for parabolic optimal control problems II: Neumann-Neumann algorithms}
\author{Martin Jakob Gander$^{1}$, Liu-Di LU$^{1}$}
\date
{%
\noindent{\small\textit{$^1$Section of Mathematics, University of Geneva, Rue du Conseil Général 7-9, 1205 Geneva, Switzerland}}\\
}
\begin{document}

\maketitle

\begin{abstract} 
We present new Neumann-Neumann algorithms based on a time domain decomposition applied to unconstrained parabolic optimal control problems. 
	After a spatial semi-discretization, 
	the Lagrange multiplier approach provides a coupled forward-backward optimality system, 
	which can be solved using a time domain decomposition. 
	Due to the forward-backward structure of the optimality system, 
	nine variants can be found for the Neumann-Neumann algorithms. 
	We analyze their convergence behavior and determine the optimal relaxation parameter for each algorithm. 
	Our analysis reveals that the most natural algorithms are actually only good smoothers, 
	and there are better choices which lead to efficient solvers. 
	We illustrate our analysis with numerical experiments.
\end{abstract}

\begin{paragraph}{Keywords: }
Time domain decomposition, 
	Neumann-Neumann algorithm, 
	Parallel in Time, 
	Parabolic optimal control problems, 
	Convergence analysis.
\end{paragraph}

\begin{paragraph}{MSCcodes: }
65M12, 65M55, 65Y05,
\end{paragraph}

\section{Introduction}
As our model problem, 
we consider a parabolic optimal control problem: 
for a given target function $\hat y\in L^2(Q)$, $\gamma>0$ and $\nu\geq 0$, 
we want to minimize the cost functional 
\begin{equation}\label{eq:J}
  J(y,u) := \frac12 \|y -\hat y\|^2_{L^2(Q)}  + \frac{\gamma}2 \|y(T) - \hat y(T)\|^2_{L^2(\Omega)} + \frac{\nu}2\|u\|^2_{U_{\text{ad}}},
\end{equation}
subject to the linear parabolic state equation:
\begin{equation}\label{eq:heat}
    \begin{aligned}
      \partial_t y - \Delta y &= u \quad &&\text{ in } Q:=\Omega\times (0,T), \\
      y &=0 &&\text{ on }\Sigma:=\partial\Omega\times(0,T), \\
      y(0) &=y_0 &&\text{ on }\Sigma_0:=\Omega\times\{0\},
    \end{aligned}
\end{equation}
where $\Omega\subset\R^d$, 
$d=1,2,3$ is a bounded domain with boundary $\partial\Omega$, 
and $T$ is the fixed final time. 
The control $u$ on the right-hand side of the PDE is in an admissible set $U_{\text{ad}}$,
and we want to control the solution of the parabolic PDE~\eqref{eq:heat} toward a target state $\hat y$. 
For simplicity, 
we consider homogeneous boundary conditions.
The parabolic optimal control problem~\eqref{eq:J}-\eqref{eq:heat} leads to necessary first-order optimality conditions (see e.g.,~\cite{Lions1971,Troltzsch2010}),
which include a forward in time primal state equation~\eqref{eq:heat},
a backward in time dual state equation,
\begin{equation}
  \begin{aligned}
    \partial_t \lambda + \Delta \lambda &= y - \hat y \quad &&\text{ in } Q, \\
    \lambda &=0 &&\text{ on }\Sigma, \\
    \lambda(T) &=-\gamma(y(T)-\hat y(T)) &&\text{ on }\Sigma_T:=\Omega\times\{T\},
  \end{aligned}
\end{equation}
and an algebraic equation $\lambda = \nu u$ with $\lambda$ the dual state.
This forward-backward system cannot be solved by standard time-stepping methods, 
and has to be solved either iteratively or at once.
Solving at once the space-time discretized system can be challenging, 
especially for spatial dimension larger than one.
To overcome this challenge,
one can use gradient type methods by solving sequentially forward-backward systems~\cite{Hinze2009,Troltzsch2010}.
Multigrid methods~\cite{Abbeloos2011,Borzi2011,Hackbusch1981,Li2017358},
tensor product techniques~\cite{Bunger2020,Gunzburger2011,Kollmann2013469,Langer2016},
model order reduction~\cite{Alla2015,Iapichino2016,Kammann2013,Kunisch2004},
can also be applied to solve such problems.
Since the role of the time variable in forward-backward optimality systems
is key, 
it is natural to seek efficient solvers through Parallel-in-time techniques.
This includes,
waveform relaxation~\cite{Lelarasmee1982,Halpern2010},
Parareal~\cite{Lions2002},
PITA~\cite{Farhat2003},
PFASST~\cite{Emmett2012}, 
MGRIT~\cite{Falgout2014}, 
see also the survey paper~\cite{Gander2015}.
Application of such techniques to treat parabolic optimal control problems can be found in~\cite{Fang2022,Gander2020,Gotschel2019,Heinkenschloss2005}.

In~\cite{Gander2023},
we considered a new time domain decomposition approach motivated by \cite{Gander2016,Kwok2017},
and analyzed the convergence behavior of Dirichlet-Neumann and Neumann-Dirichlet algorithms within this framework.
We have surprisingly discovered different variants of Dirichlet-Neumann and Neumann-Dirichlet algorithms for the parabolic optimal control problem~\eqref{eq:J}-\eqref{eq:heat}, 
when decomposing in time.
This is mainly due to the forward-backward structure of the optimality system.
The present paper is the sequel of~\cite{Gander2023}:
our goal is to investigate Neumann-Neumann techniques~\cite{Bjorstad1986} in the context of time domain decomposition and analyze their convergence behavior.
We consider a semi-discretization in space and focus on the time variable.
This consists in replacing the spatial operator $-\Delta$ by a matrix $A\in\R^{n\times n}$, 
for instance using a Finite Difference discretization in space.
If $A$ is symmetric, 
which is natural for discretizations of $-\Delta$, 
then it can be diagonalized with $A=PDP^T$,
and the diagonalized system reads,
\begin{equation}\label{eq:sysODEreduced}
  \left\{
    \begin{aligned}
      \begin{pmatrix}
        \dot z_{i}\\
        \dot \mu_{i}
      \end{pmatrix}
      +
      \begin{pmatrix}
        d_i & -\nu^{-1} \\
        -1 & -d_i
      \end{pmatrix}
      \begin{pmatrix}
        z_{i}\\
        \mu_{i}
      \end{pmatrix}
      &=
      \begin{pmatrix}
        0\\
        -\hat z_{i}
      \end{pmatrix} \text{ in } (0,T),\\
      z_{i}(0) &= z_{i,0},\\
      \mu_{i}(T) + \gamma z_{i}(T) &= \gamma \hat z_{i}(T),
    \end{aligned}
  \right.
\end{equation}
where $d_i$ is the $i$-th eigenvalue of the matrix $A$, 
and $z_{i}$, 
$\mu_{i}$ as well as $\hat z_{i}$ are the $i$-th components of the vectors $\bm{z}$, 
$\bm{\mu}$ and $\hat{\bm{z}}$. 
Eliminating $\mu_{i}$ in~\eqref{eq:sysODEreduced}, 
we obtain the second-order ODE
\begin{equation}\label{eq:z}
  \left\{\begin{aligned}
    \ddot z_{i}- (d_i^2+\nu^{-1}) z_{i} &= -\nu^{-1}\hat z_{i} \text{ in } (0,T),\\
    z_{i}(0) &= z_{i,0},\\
    \dot z_{i}(T) + (\nu^{-1}\gamma + d_i) z_{i}(T)&= \nu^{-1}\gamma \hat z_{i}(T).
  \end{aligned}\right.
\end{equation}
We refer to~\cite[Section 2]{Gander2023} for more details about the transition from the PDE-constrained problem~\eqref{eq:J}-\eqref{eq:heat} to the diagonalized reduced problem~\eqref{eq:sysODEreduced}.

The rest of the paper is structured as follows.
We introduce in Section~\ref{sec:analysis} our new time decomposed Neumann-Neumann algorithms and study their convergence behavior in Section~\ref{sec:cv}. 
Numerical experiments are shown in
Section~\ref{sec:test} to support our analysis, 
and we draw conclusions in Section~\ref{sec:conclusion}.

\section{Neumann-Neumann algorithms}\label{sec:analysis}
In this section, 
we apply the Neumann-Neumann technique (NN) in time to obtain our new time domain decomposition methods to solve the system~\eqref{eq:sysODEreduced}, 
and investigate their convergence behavior. 
To focus on the error equation, 
we set both the initial condition $\y_0=0$ (i.e., $\bm{z}_0=0$) and the target function $\hat{\y}=0$ (i.e., $\hat{\bm{z}}=0$). 
We decompose the time domain $\Omega:=(0,T)$ into two non-overlapping subdomains $\Omega_1:=(0,\alpha)$ and $\Omega_2:=(\alpha,T)$, 
where $\alpha$ is the interface.
And we denote by $z_{j,i}$ and $\mu_{j,i}$ the restriction to $\Omega_j$, 
$j=1,2$ of the states $z_{i}$ and $\mu_{i}$. 
Although we will focus on the two-subdomain case in our current study,
the results can be extended to $N$ non-overlapping subdomains $\Omega_j:=(\alpha_j,\alpha_{j+1})$, 
$j=1,\ldots,N$ with $\alpha_1=0$ and $\alpha_{N+1}=T$.

Unlike the name of the NN algorithm suggests,
it starts first with a Dirichlet step,
which will be corrected by a Neumann step and then updates the transmission condition.
As the system~\eqref{eq:sysODEreduced} is a forward-backward system, 
it appears natural at first glance to keep this property for the decomposed case as illustrated in Figure~\ref{fig:forback}: 
we expect to have a final condition for the dual state $\mu_{1,i}$ in $\Omega_1$, 
since we already have an initial condition for $z_{1,i}$; 
similarly, 
we expect to have an initial condition for the primal state $z_{2,i}$ in $\Omega_2$, 
where we already have a final condition for $\mu_{2,i}$.
\begin{figure}
\label{fig:forback}
  \centering
  \begin{tikzpicture}
    \node (a) at (0,0) {0};
    \node (b) at (8,0) {$T$};
    \node (c) at (3,-0.2) {$\alpha$};
    \draw[thick,->] (a)--(b);
    \draw[dashed] (3.2,-0.6)--(3.2,1.1);
    \draw[<->] (0.2,0.8)--(3.2,0.8);
    \draw[<->] (3.2,0.6)--(7.7,0.6);
    \node (d) at (1.7,1) {$\Omega_1$};
    \node (e) at (5.5,0.8) {$\Omega_2$};
    \draw[->] (0.2,0.2)--(1,0.2);
    \node at (0.4,0.4) {$z_{i}$};
    \draw[dashed,->] (3.2,0.2)--(4,0.2);
    \draw[->] (7.6,-0.5)--(6.8,-0.5);
    \node at (7.4,-0.3) {$\mu_{i}$};
    \draw[dashed,->] (3.2,-0.5)--(2.4,-0.5);
  \end{tikzpicture}
  \caption{Illustration of the forward-backward system.}
\end{figure}
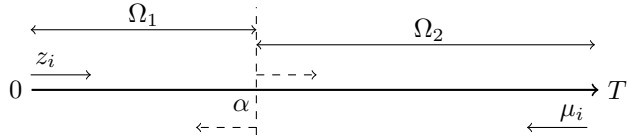
Therefore, 
for iteration index $k=1,2,\ldots$, 
a natural NN algorithm first solves the Dirichlet step
\begin{equation}\label{eq:NN1a}
\begin{aligned}
  &\left\{
    \begin{aligned}
      \begin{pmatrix}
        \dot z_{1,i}^k\\
        \dot \mu_{1,i}^k
      \end{pmatrix}
      +
      \begin{pmatrix}
        d_i & -\nu^{-1}\\
        -1 & -d_i
      \end{pmatrix}
      \begin{pmatrix}
        z_{1,i}^k\\
        \mu_{1,i}^k
      \end{pmatrix}
      &=
      \begin{pmatrix}
        0\\
        0
      \end{pmatrix} \text{ in } \Omega_1,\\
      z_{1,i}^k(0) &= 0,\\
      \mu_{1,i}^k(\alpha) & = f_{\alpha,i}^{k-1},
    \end{aligned}
  \right.\\
  &\left\{
    \begin{aligned}
      \begin{pmatrix}
        \dot z_{2,i}^k\\
        \dot \mu_{2,i}^k
      \end{pmatrix}
      +
      \begin{pmatrix}
        d_i & -\nu^{-1}\\
        -1 & -d_i
      \end{pmatrix}
      \begin{pmatrix}
        z_{2,i}^k\\
        \mu_{2,i}^k
      \end{pmatrix}
      &=
      \begin{pmatrix}
        0\\
        0
      \end{pmatrix} \text{ in } \Omega_2,\\
      z_{2,i}^k(\alpha) &= g_{\alpha,i}^{k-1},\\
      \mu_{2,i}^k(T) + \gamma z_{2,i}^k(T) &= 0,
    \end{aligned}
  \right.
   \end{aligned}
\end{equation}
then corrects the result by solving the Neumann step
\begin{equation}\label{eq:NN1acor}
\begin{aligned}
  &\left\{
    \begin{aligned}
      \begin{pmatrix}
        \dot \psi_{1,i}^k\\
        \dot \phi_{1,i}^k
      \end{pmatrix}
      +
      \begin{pmatrix}
        d_i & -\nu^{-1} \\
        -1 & -d_i
      \end{pmatrix}
      \begin{pmatrix}
        \psi_{1,i}^k\\
        \phi_{1,i}^k
      \end{pmatrix}
      &=
      \begin{pmatrix}
        0\\
        0
      \end{pmatrix} \text{ in } \Omega_1,\\
      \psi_{1,i}^k(0) &= 0,\\
      \dot\phi_{1,i}^k(\alpha) & = \dot \mu_{1,i}^k(\alpha) - \dot \mu_{2,i}^k(\alpha),
    \end{aligned}
  \right.\\
  &\left\{
    \begin{aligned}
      \begin{pmatrix}
        \dot \psi_{2,i}^k\\
        \dot \phi_{2,i}^k
      \end{pmatrix}
      +
      \begin{pmatrix}
        d_i & -\nu^{-1}\\
        -1 & -d_i
      \end{pmatrix}
      \begin{pmatrix}
        \psi_{2,i}^k\\
        \phi_{2,i}^k
      \end{pmatrix}
      &=
      \begin{pmatrix}
        0\\
        0
      \end{pmatrix} \text{ in } \Omega_2,\\
      \dot\psi_{2,i}^k(\alpha) &= \dot z_{2,i}^k(\alpha) - \dot z_{1,i}^k(\alpha),\\
      \phi_{2,i}^k(T) + \gamma \psi_{2,i}^k(T) &= 0,
    \end{aligned}
  \right.
\end{aligned}
\end{equation}
where $\psi_{i}$ is the primal correction state for $z_{i}$ and $\phi_{i}$ the dual correction state for $\mu_{i}$.
Finally, 
we update the transmission condition by 
\begin{equation}\label{eq:NN1atran}
	f_{\alpha,i}^k:=f_{\alpha,i}^{k-1} - \theta_1 \big(\phi_{1,i}^k(\alpha) + \phi_{2,i}^k(\alpha)\big), \quad g_{\alpha,i}^k:=g_{\alpha,i}^{k-1} - \theta_2 \big(\psi_{1,i}^k(\alpha) + \psi_{2,i}^k(\alpha)\big),
\end{equation}
with two relaxation parameters $\theta_1,\theta_2>0$.

As shown in the algorithm~\eqref{eq:NN1a}-\eqref{eq:NN1acor}, 
both Dirichlet and Neumann steps have the forward-backward structure. 
However, 
this structure only appears as being the natural one at first glance.
Indeed,
isolating the variable in each equation in the systems~\eqref{eq:NN1a} and~\eqref{eq:NN1acor}, 
we find the identities 
\begin{equation}\label{eq:4id} 
	\mu_{i} = \nu(\dot z_{i} + d_i z_{i}), \quad z_{i} = \dot \mu_{i}  - d_i \mu_{i}, \quad \phi_{i} = \nu(\dot \psi_{i} + d_i \psi_{i}), \quad \psi_{i} = \dot \phi_{i}  - d_i \phi_{i}.
\end{equation}
To shorten the notation,
we define
\begin{equation}\label{eq:sob}
  \sigma_i:=\sqrt{d_i^2+\nu^{-1}}, \quad \omega_i:=d_i+\gamma\nu^{-1}, \quad \beta_i:=1-\gamma d_i.
\end{equation}
Using~\eqref{eq:4id} and~\eqref{eq:sob}, 
we can rewrite the Dirichlet step~\eqref{eq:NN1a} in terms of the primal state $z_{i}$,
\begin{equation}\label{eq:errNN1}
  \left\{
    \begin{aligned}
      \ddot z_{1,i}^k - \sigma_i^2 z_{1,i}^k &= 0 \text{ in } \Omega_1,\\
      z_{1,i}^k(0) &= 0, \\
      \dot z_{1,i}^k(\alpha) + d_i z_{1,i}^k(\alpha) &= f_{\alpha,i}^{k-1},
    \end{aligned}
  \right.
  \quad
  \left\{
    \begin{aligned}
      \ddot z_{2,i}^k - \sigma_i^2 z_{2,i}^k &= 0 \text{ in } \Omega_2,\\
      z_{2,i}^k(\alpha) &= g_{\alpha,i}^{k-1},\\
      \dot z_{2,i}^k(T) + \omega_iz_{2,i}^k(T)&= 0.
    \end{aligned}
  \right.
\end{equation}
Similarly, 
the Neumann step~\eqref{eq:NN1acor} can be rewritten in terms of the primal correction state $\psi_{i}$,
\begin{equation}\label{eq:errNN1acor}
  \begin{aligned}
    &\left\{
      \begin{aligned}
        \ddot \psi_{1,i}^k - \sigma_i^2\psi_{1,i}^k &= 0 \text{ in } \Omega_1,\\
        \psi_{1,i}^k(0) &= 0, \\
        \dot \psi_{1,i}^k(\alpha) + \frac{\sigma_i^2}{d_i} \psi_{1,i}^k(\alpha) &= \big(\dot z_{1,i}^k(\alpha) + \frac{\sigma_i^2}{d_i} z_{1,i}^k(\alpha)\big) - \big(\dot z_{2,i}^k(\alpha) + \frac{\sigma_i^2}{d_i} z_{2,i}^k(\alpha)\big),
      \end{aligned}
    \right.
    \\
    &\left\{
      \begin{aligned}
        \ddot \psi_{2,i}^k - \sigma_i^2 \psi_{2,i}^k &= 0 \text{ in } \Omega_2,\\
        \dot \psi_{2,i}^k(\alpha) &= \dot z_{2,i}^k(\alpha) - \dot z_{1,i}^k(\alpha),\\
        \dot \psi_{2,i}^k(T) + \omega_i \psi_{2,i}^k(T) &= 0,
      \end{aligned}
    \right.
  \end{aligned}
\end{equation}
and the transmission condition~\eqref{eq:NN1atran} becomes
\begin{equation}\label{eq:errNN1atran}
  \begin{aligned}
    f_{\alpha,i}^k&=f_{\alpha,i}^{k-1} - \theta_1 \big(\dot \psi_{1,i}^k(\alpha) + d_i \psi_{1,i}^k(\alpha)+\dot \psi_{2,i}^k(\alpha) + d_i \psi_{2,i}^k(\alpha)\big), \\
    g_{\alpha,i}^k&=g_{\alpha,i}^{k-1} - \theta_2 \big(\psi_{1,i}^k(\alpha) + \psi_{2,i}^k(\alpha)\big).
  \end{aligned}
\end{equation}
Instead of using~\eqref{eq:NN1a}-\eqref{eq:NN1atran} for our analysis,
we will use the equivalent formulation in system~\eqref{eq:errNN1}-\eqref{eq:errNN1atran},  
in which the forward-backward structure has disappeared. 
Furthermore, 
the Dirichlet step in~\eqref{eq:NN1a} transforms in the primal state $z_{i}$ to a Robin-Dirichlet (RD) step~\eqref{eq:errNN1}, 
and the Neumann step in~\eqref{eq:NN1acor} transforms in the primal correction state $\psi_{i}$ to a Robin-Neumann (RN) step~\eqref{eq:errNN1acor}. 
In other words, 
we analyze actually a RD step with a RN correction, 
although it is originally a NN algorithm. 
We could also have interpreted the NN algorithm~\eqref{eq:NN1a}-\eqref{eq:NN1atran} using the dual state $\mu_{i}$ and the dual correction state $\phi_{i}$, 
the algorithm would then read differently but the convergence analysis is still the same (see~\cite{Gander2023}).
For the sake of consistency, 
we keep the interpretation with $z_{i}$ and $\psi_{i}$ for all convergence analyses.

The previous transformation reveals that the natural NN algorithm applied to the optimality system~\eqref{eq:sysODEreduced} is certainly not the only option.
Since there are three components in a NN algorithm:
a Dirichlet step, 
a Neumann step and an update step,
this expands our options when dealing with parabolic optimal control problems, 
and provides us with more choices within the NN algorithm.
More precisely,
instead of applying the Dirichlet step to the pair $(z_{i},\mu_{i})$, 
one can also apply it only to the primal state $z_{i}$ or the dual state $\mu_{i}$.
Likewise, 
the Neumann step can also be applied only to the primal correction state $\psi_{i}$ or the dual correction state $\phi_{i}$. 
We list in Table~\ref{tab:combination} all possible new time domain decomposition NN algorithms we can obtain, 
together with their equivalent interpretations in terms of the states $z_{i}$ and $\psi_{i}$. 
\begin{table}\label{tab:combination}
  \begin{center}
  \caption{Variants of the Neumann-Neumann algorithm.}
    \begin{tabular}{ c|c|c|c|c }
     category&step &$\Omega_1$ &$\Omega_2$& algorithm type  \\
     \hline\hline 
     \multirow{8}{*}{category I: $(z_{i},\mu_{i})$}&Dirichlet &$\mu_{i}$ &$z_{i}$ &(DD) \\  
     &step &$\dot z_{i}+d_i z_{i}$ &$z_{i}$ &(RD)\\
     \cline{2-5} 
      & &$\dot \phi_{i}$ &$\dot \psi_{i}$ &(NN)\\  
      & &$\ddot \psi_{i}+d_i \dot \psi_{i}$ &$\dot \psi_{i}$ &(RN)\\ 
     \cline{3-5}
     &Neumann &$\dot\psi_{i}$ &$\dot\psi_{i}$ &(NN) \\  
     &step &$\dot \psi_{i}$ &$\dot \psi_{i}$ &(NN)\\  
     \cline{3-5} 
     & &$\dot \phi_{i}$ &$\dot \phi_{i}$ &(NN)\\
     & &$\ddot \psi_{i}+d_i \dot \psi_{i}$ &$\ddot \psi_{i}+d_i \dot \psi_{i}$ &(RR)\\ 
     \hline\hline
     \multirow{8}{*}{category II: $z_{i}$}&Dirichlet &$z_{i}$ &$z_{i}$ &(DD) \\  
     &step &$z_{i}$ & $z_{i}$ &(DD)\\
     \cline{2-5} 
     & &$\dot\psi_{i}$ &$\dot\psi_{i}$ &(NN) \\    
     & &$\dot \psi_{i}$ &$\dot \psi_{i}$ &(NN)\\
     \cline{3-5}
     &Neumann &$\dot \phi_{i}$ &$\dot \psi_{i}$ &(NN)\\ 
     &step &$\ddot \psi_{i}+d_i \dot \psi_{i}$ &$\dot \psi_{i}$ &(RN)\\   
     \cline{3-5} 
     & &$\dot \phi_{i}$ &$\dot \phi_{i}$ &(NN)\\
     & &$\ddot \psi_{i}+d_i \dot \psi_{i}$ &$\ddot \psi_{i}+d_i \dot \psi_{i}$ &(RR)\\ 
     \hline\hline
     \multirow{8}{*}{category III: $\mu_{i}$}&Dirichlet &$\mu_{i}$ &$\mu_{i}$ &(DD) \\ 
     &step &$\dot z_{i}+d_i z_{i}$ &$\dot z_{i}+d_i z_{i}$ &(RR)\\
     \cline{2-5} 
     & &$\dot \phi_{i}$ &$\dot \phi_{i}$ &(NN)\\
     & &$\ddot \psi_{i}+d_i \dot \psi_{i}$ &$\ddot \psi_{i}+d_i \dot \psi_{i}$ &(RR)\\
     \cline{3-5}
     &Neumann &$\dot \phi_{i}$ &$\dot \psi_{i}$ &(NN)\\ 
     &step &$\ddot \psi_{i}+d_i \dot \psi_{i}$ &$\dot \psi_{i}$ &(RN)\\   
     \cline{3-5} 
     & &$\dot\psi_{i}$ &$\dot\psi_{i}$ &(NN) \\    
     & &$\dot \psi_{i}$ &$\dot \psi_{i}$ &(NN)\\
    \end{tabular}
  \end{center}
\end{table}
According to the Dirichlet step, 
they can be classified into three main categories. 
Each category is composed of two blocks, 
the first block represents the Dirichlet step and the second block the three possible Neumann steps. 
And each step contains two rows, 
the first row is the algorithm applied to~\eqref{eq:sysODEreduced}, 
and the second row represents the algorithm applied to~\eqref{eq:z}. 
Note that the update step should also be adapted when modifying the Dirichlet step or the Neumann step.
We will further discuss this in the next section, 
where we investigate the convergence of each algorithm.

\begin{remark}\label{rem:transfer}
	Although most of the algorithms in Table~\ref{tab:combination} do not look like having the forward-backward structure, 
	it can always be recovered by using the identities in~\eqref{eq:4id}. 
	Furthermore, 
	the transmission condition $\ddot \psi_{i} + d_i \dot \psi_{i}$ is actually a Robin type condition, 
	considering the first equation in~\eqref{eq:errNN1acor}. 
\end{remark}

\begin{remark}
	If the order in~\eqref{eq:NN1a}-\eqref{eq:NN1acor} is reversed,
	and one starts with the Neumann step,
	followed by the Dirichlet correction,
	the algorithm is then known under the name FETI (Finite Element Tearing and Interconnecting), 
	invented by Farhat and Roux~\cite{Farhat1991}.
	Since the two algorithms are very much related,
	we can also find similar variants as in Table~\ref{tab:combination} in the context of FETI algorithm.
\end{remark}

\section{Convergence analysis}\label{sec:cv}
In this section,
we will study the convergence of each algorithm listed in Table~\ref{tab:combination}.
Note that the two systems~\eqref{eq:errNN1} and~\eqref{eq:errNN1acor} are very similar, 
the only difference is in the transmission condition at $\alpha$. 
We can hence solve these two systems once and for all using the initial and the final condition, 
and find
\begin{equation}\label{eq:errNNsol}
	\begin{aligned}
		z_{1,i}^k(t) &= A_i^k \sinh(\sigma_i t), \quad z_{2,i}^k(t) =  B_i^k \Big(\sigma_i\cosh\big(\sigma_i (T-t)\big) + \omega_i\sinh\big(\sigma_i (T-t)\big) \Big),\\
		\psi_{1,i}^k(t) &= C_i^k \sinh(\sigma_i t), \quad \psi_{2,i}^k(t) = D_i^k\Big(\sigma_i\cosh\big(\sigma_i (T-t)\big) + \omega_i\sinh\big(\sigma_i (T-t)\big) \Big).
	\end{aligned}
\end{equation}
In general, 
the solutions~\eqref{eq:errNNsol} remain for all algorithms listed in Table~\ref{tab:combination}, 
and the coefficients $A_i^k, B_i^k, C_i^k$ and $D_i^k$ will be determined by the transmission conditions. 
To stay in a compact form,
we will only present the modified step for each NN variant instead of giving a complete three-step algorithm.

\subsection{Category I}
This category consists in applying the Dirichlet step to the pair $(z_{i},\mu_{i})$. 
As illustrated in Table~\ref{tab:combination}, 
there are three variants according to the Neumann correction step.

\subsubsection{Algorithm NN$_{1\text{a}}$}
This is~\eqref{eq:NN1a}-\eqref{eq:NN1atran}, 
at first glance the most natural NN algorithm, 
which keeps the forward-backward structure both for the Dirichlet and Neumann steps.
To analyze its convergence behavior, 
we interpret it as~\eqref{eq:errNN1}-\eqref{eq:errNN1atran} and solve for the exact iterates. 
Using~\eqref{eq:errNNsol}, 
we determine the coefficients $A_i^k$, $B_i^k$ through the transmission conditions in~\eqref{eq:errNN1}, 
and find 
\begin{equation}\label{eq:D1AB}
	A_i^k = \frac{f_{\alpha,i}^{k-1}}{\sigma_i\cosh(a_i) + d_i\sinh(a_i)}, \quad B_i^k = \frac{g_{\alpha,i}^{k-1}}{\sigma_i\cosh(b_i) + \omega_i\sinh(b_i) },
\end{equation}
where we let $a_i:=\sigma_i\alpha$ and $b_i:=\sigma_i (T-\alpha)$ to simplify the notations, 
and $a_i+b_i = \sigma_i T$. 
Using once again~\eqref{eq:errNNsol}, 
we determine the coefficients $C_i^k$, $D_i^k$ through the transmission conditions in~\eqref{eq:errNN1acor}
\begin{equation}\label{eq:N1CD}
	C_i^k = A_i^k - B_i^k\nu^{-1}\frac{\sigma_i\gamma\sinh(b_i) + \beta_i\cosh(b_i)}{\sigma_i\sinh(a_i)+d_i\cosh(a_i)}, \, D_i^k=A_i^k\frac{\cosh(a_i)}{\sigma_i\sinh(b_i) + \omega_i\cosh(b_i)} + B_i^k.
\end{equation}
We then update the transmission condition~\eqref{eq:errNN1atran}, and find
\begin{equation}\label{eq:NN1aiter}
  \begin{pmatrix}
    f_{\alpha,i}^k\\
    g_{\alpha,i}^k
  \end{pmatrix} = \begin{pmatrix}
    1-\theta_1d_iE_i & \theta_1\nu^{-1} F_i\\
    -\theta_2E_i & 1-\theta_2d_iF_i
  \end{pmatrix}\begin{pmatrix}
    f_{\alpha,i}^{k-1}\\
    g_{\alpha,i}^{k-1}
  \end{pmatrix},
\end{equation}
with 
\[\begin{aligned}
E_i=\frac{\sigma_i\cosh(\sigma_iT)+\omega_i\sinh(\sigma_iT)}{\sigma_i\sinh(b_i)+\omega_i\cosh(b_i)}\frac{1}{\sigma_i\cosh(a_i)+d_i\sinh(a_i)},\\
F_i=\frac{\sigma_i\cosh(\sigma_i T)+\omega_i\sinh(\sigma_i T)}{\sigma_i\cosh(b_i) + \omega_i\sinh(b_i)}\frac{1}{\sigma_i\sinh(a_i)+d_i\cosh(a_i)}.
\end{aligned}\]
The characteristic polynomial associated with the iteration matrix in~\eqref{eq:NN1aiter} is 
\[X^2 + (\theta_1d_iE_i +\theta_2d_iF_i-2)X + 1-\theta_1d_iE_i-\theta_2d_iF_i +\theta_1\theta_2\sigma_i^2E_iF_i.\]
We then have the following result.

\begin{theorem}
  Algorithm NN$_{1\text{a}}$~\eqref{eq:NN1a}-\eqref{eq:NN1atran} converges if and only if 
  \begin{equation}\label{eq:rhoNN1a}
      \rho_{\text{NN}_{1\text{a}}}:=\max_{d_i\in\lambda(A)}\Big\{\Big|1-\frac{d_i(\theta_1E_i +\theta_2F_i)\pm\sqrt{d_i^2(\theta_1E_i +\theta_2F_i)^2  -4\theta_1\theta_2\sigma_i^2E_iF_i}}{2}\Big|\Big\}<1,
  \end{equation}
  where $\lambda(A)$ is the spectrum of the matrix $A$.
\end{theorem}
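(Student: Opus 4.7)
The plan is to read off the convergence rate directly from the matrix iteration already derived in equation~\eqref{eq:NN1aiter}. Because the spatial semi-discretization has been diagonalized, the interface updates $(f_{\alpha,i}^k,g_{\alpha,i}^k)$ decouple across the eigenmodes $d_i\in\lambda(A)$, so the global iteration converges if and only if every mode-$i$ iteration matrix is a contraction. Thus the first step is to identify the overall convergence rate as
\[
\rho_{\text{NN}_{1\text{a}}} = \max_{d_i\in\lambda(A)} \rho(M_i),
\]
where $M_i$ is the $2\times 2$ matrix displayed in~\eqref{eq:NN1aiter}.

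Next I would compute $\rho(M_i)$ explicitly. The characteristic polynomial $X^2 + (\theta_1 d_i E_i+\theta_2 d_i F_i-2)X + 1-\theta_1 d_i E_i-\theta_2 d_i F_i+\theta_1\theta_2\sigma_i^2 E_iF_i$ is already stated just before the theorem, so I only need to apply the quadratic formula. Writing the roots as
\[
X_\pm = 1 - \frac{d_i(\theta_1 E_i+\theta_2 F_i)\pm\sqrt{d_i^2(\theta_1 E_i+\theta_2 F_i)^2 - 4\theta_1\theta_2\sigma_i^2 E_i F_i}}{2},
\]
one sees that $\rho(M_i) = \max(|X_+|,|X_-|)$, and taking the maximum over $d_i\in\lambda(A)$ gives exactly the expression~\eqref{eq:rhoNN1a}.

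The only nontrivial part of the argument is checking that $M_i$ is indeed the mode-$i$ iteration matrix, i.e.\ that the closed-form solutions in~\eqref{eq:errNNsol}, the coefficient identities~\eqref{eq:D1AB} and~\eqref{eq:N1CD}, and the update rule~\eqref{eq:errNN1atran} really combine to yield the entries of $M_i$ through $E_i$ and $F_i$. This is a routine but lengthy substitution: inserting~\eqref{eq:D1AB} into $\psi_{1,i}^k,\psi_{2,i}^k$ via~\eqref{eq:N1CD}, evaluating at $\alpha$, and plugging the result into~\eqref{eq:errNN1atran} produces the claimed matrix, with the factor $\sigma_i\cosh(\sigma_i T)+\omega_i\sinh(\sigma_i T)$ appearing from combining $\sinh$ and $\cosh$ terms at $\alpha$ and $T-\alpha$ through the addition formulas (recall $a_i+b_i=\sigma_i T$). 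This computational verification, already essentially done in the derivation of~\eqref{eq:NN1aiter}, is the one place where care with signs and trigonometric identities is needed and is the main technical obstacle.

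Finally, the "if and only if" follows from the standard characterization that a linear fixed-point iteration with iteration matrix $M$ converges (for every initial error) precisely when $\rho(M)<1$. Applied modewise and then taking the maximum over the finite spectrum $\lambda(A)$ yields the stated criterion~\eqref{eq:rhoNN1a}.
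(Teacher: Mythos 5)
Your proposal is correct and follows exactly the route the paper takes: the theorem is an immediate consequence of the iteration matrix~\eqref{eq:NN1aiter} and its characteristic polynomial, whose roots via the quadratic formula give the spectral radius, and convergence of the decoupled modewise linear fixed-point iteration is equivalent to $\rho(M_i)<1$ for every $d_i\in\lambda(A)$. The paper leaves this step implicit after deriving~\eqref{eq:D1AB}, \eqref{eq:N1CD} and~\eqref{eq:NN1aiter}, so there is nothing to add.
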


To get more insight in the convergence factor~\eqref{eq:rhoNN1a}, 
we consider a few special cases.
Supposing no final target (i.e., $\gamma=0$) and a symmetric decomposition $\alpha=\frac T2$ (i.e., $a_i=b_i$),
we have 
\[E_i=F_i=\frac{2d_i\tanh(a_i)+\sigma_i(1+\tanh^2(a_i))}{(\sigma_i^2+d_i^2)\tanh(a_i)+d_i\sigma_i (1+\tanh^2(a_i))}<\frac 1{d_i}.\] 
Letting $\theta_1=\theta_2=\theta$, 
the convergence factor~\eqref{eq:rhoNN1a} then becomes
\[|1-\theta d_iE_i\pm\theta E_i\sqrt{d_i^2-\sigma_i^2}|,\]
where the discriminant is negative due to $d_i^2-\sigma_i^2 = -\nu^{-1}$.
Thus,
the convergence factor $\rho_{\text{NN}_{1\text{a}}}$ in this case is
\[\sqrt{1-2\theta d_i E_i+\theta^2\sigma_i^2E_i^2}>\sqrt{1-2\theta+\theta^2\sigma_i^2E_i^2}\geq\sqrt{1-2\theta}.\]

\begin{remark}\label{rem:0}
	For the Laplace operator with homogeneous Dirichlet boundary conditions in our model problem~\eqref{eq:heat}, 
	there is no zero eigenvalue for its discretization matrix $A$.
	For a zero eigenvalue, 
	$d_i=0$, 
	we have from~\eqref{eq:sob} that 
	\begin{equation}\label{eq:sob0}
		\sigma_i|_{d_i=0}=\sqrt{\nu^{-1}}, \quad \omega_i|_{d_i=0}=\gamma\nu^{-1}, \quad \beta_i|_{d_i=0}=1.
	\end{equation} 
	Substituting~\eqref{eq:sob0} into the convergence factor~\eqref{eq:rhoNN1a},
	we find $\rho_{\text{NN}_{1\text{a}}}|_{d_i=0}=\{ |1\pm\sqrt{-\theta_1\theta_2(E_iF_i)|_{d_i=0}} |\}$ with 
\[\begin{aligned}
(E_iF_i)|_{d_i=0}=2 + \coth(\sqrt{\nu^{-1}}\alpha) \frac{\coth(\sqrt{\nu^{-1}}(T-\alpha))+\gamma\sqrt{\nu^{-1}}}{1+\gamma\sqrt{\nu^{-1}}\coth(\sqrt{\nu^{-1}}(T-\alpha)) } \\
+\tanh(\sqrt{\nu^{-1}}\alpha) \frac{\tanh(\sqrt{\nu^{-1}}(T-\alpha))+\gamma\sqrt{\nu^{-1}}}{1+ \gamma\sqrt{\nu^{-1}}\tanh(\sqrt{\nu^{-1}}(T-\alpha))}.
\end{aligned}\]
	Since $(E_iF_i)|_{d_i=0}$, $\theta_1$, $\theta_2$ are all positive,
	the discriminant is once again negative,
	and we have $\rho_{\text{NN}_{1\text{a}}}|_{d_i=0}=\sqrt{1+ \theta_1\theta_2(E_iF_i)|_{d_i=0}}$,
	which is always greater than one.
	In other words,
	the convergence behavior of algorithm NN$_{1\text{a}}$ for small eigenvalues is not good,
	and cannot be fixed with relaxation.
\end{remark}

\begin{remark}\label{rem:infty}
  For large eigenvalues $d_i$, 
  we have from~\eqref{eq:sob} that 
  \begin{equation}\label{eq:sobinf}
  	\sigma_i\sim_{\infty} d_i, \quad \omega_i\sim_{\infty} d_i, \quad \beta_i\sim_{\infty} -d_i,
   \end{equation} 
  and thus obtain $E_i \sim_{\infty} \frac1{d_i}$ and $F_i \sim_{\infty} \frac1{d_i}$. 
  Substituting these into~\eqref{eq:rhoNN1a}, 
  we find $\lim_{d_i\rightarrow\infty} \rho_{\text{NN}_{1\text{a}}} = \{|1-\theta_1|,|1-\theta_2|\}$.
  In other words, 
  high frequency convergence is robust with relaxation,
  and one can get a good smoother using $\theta_1 = \theta_2 = 1$.
\end{remark}

The above analysis reveals the fact that this most natural NN algorithm is a good smoother but not a good solver.

\subsubsection{Algorithm NN$_{1\text{b}}$}\label{sec:NN1b}
We apply now the Neumann step only to the primal correction state $\psi_{i}$. 
For $k=1,2,\ldots$, 
we consider the algorithm that first solves the Dirichlet step~\eqref{eq:NN1a}, 
and then corrects it by solving the Neumann step
\begin{equation}\label{eq:NN1bcor}
\begin{aligned}
  &\left\{
    \begin{aligned}
      \begin{pmatrix}
        \dot \psi_{1,i}^k\\
        \dot \phi_{1,i}^k
      \end{pmatrix}
      +
      \begin{pmatrix}
        d_i & -\nu^{-1} \\
        -1 & -d_i
      \end{pmatrix}
      \begin{pmatrix}
        \psi_{1,i}^k\\
        \phi_{1,i}^k
      \end{pmatrix}
      &=
      \begin{pmatrix}
        0\\
        0
      \end{pmatrix} \text{ in } \Omega_1,\\
      \psi_{1,i}^k(0) &= 0,\\
      \dot\psi_{1,i}^k(\alpha) & = \dot z_{1,i}^k(\alpha) - \dot z_{2,i}^k(\alpha),
    \end{aligned}
  \right.\\
  &\left\{
    \begin{aligned}
      \begin{pmatrix}
        \dot \psi_{2,i}^k\\
        \dot \phi_{2,i}^k
      \end{pmatrix}
      +
      \begin{pmatrix}
        d_i & -\nu^{-1}\\
        -1 & -d_i
      \end{pmatrix}
      \begin{pmatrix}
        \psi_{2,i}^k\\
        \phi_{2,i}^k
      \end{pmatrix}
      &=
      \begin{pmatrix}
        0\\
        0
      \end{pmatrix} \text{ in } \Omega_2,\\
      \dot\psi_{2,i}^k(\alpha) &= \dot z_{2,i}^k(\alpha) - \dot z_{1,i}^k(\alpha),\\
      \phi_{2,i}^k(T) + \gamma \psi_{2,i}^k(T) &= 0.
    \end{aligned}
  \right.
\end{aligned}
\end{equation}
As for the update step,
let us first consider keeping the same update as~\eqref{eq:NN1atran}.

Unlike the Dirichlet step~\eqref{eq:NN1a}, 
the Neumann step~\eqref{eq:NN1bcor} does not have the forward-backward structure in the current form, 
but this can be recovered using the identities in~\eqref{eq:4id}. 
More precisely, 
we can rewrite the transmission condition 
$\dot\psi_{1,i}^k(\alpha) = \dot z_{1,i}^k(\alpha) - \dot z_{2,i}^k(\alpha)$ as 
\[\dot\phi_{1,i}^k(\alpha)-\frac{\sigma_i^2}{d_i}\phi_{1,i}^k(\alpha)   = (\dot\mu_{1,i}^k(\alpha) - \frac{\sigma_i^2}{d_i} \mu_{1,i}^k(\alpha)) - (\dot\mu_{2,i}^k(\alpha) - \frac{\sigma_i^2}{d_i} \mu_{2,i}^k(\alpha)),\] 
which is a Robin type condition.
In other words, 
when the forward-backward structure is recovered with this interpretation, 
the Neumann step~\eqref{eq:NN1bcor} becomes a RN step.

Compared with algorithm NN$_{1\text{a}}$, 
only the Neumann step is modified, 
which can be transformed into  
\begin{equation}\label{eq:errNN1bcor}
  \begin{aligned}
    \left\{
      \begin{aligned}
        \ddot \psi_{1,i}^k - \sigma_i^2\psi_{1,i}^k &= 0 \text{ in } \Omega_1,\\
        \psi_{1,i}^k(0) &= 0, \\
        \dot \psi_{1,i}^k(\alpha)  &= \dot z_{1,i}^k(\alpha)  - \dot z_{2,i}^k(\alpha),
      \end{aligned}
    \right.
    \quad
    \left\{
      \begin{aligned}
        \ddot \psi_{2,i}^k - \sigma_i^2 \psi_{2,i}^k &= 0 \text{ in } \Omega_2,\\
        \dot \psi_{2,i}^k(\alpha) &= \dot z_{2,i}^k(\alpha) - \dot z_{1,i}^k(\alpha),\\
        \dot \psi_{2,i}^k(T) + \omega_i \psi_{2,i}^k(T) &= 0.
      \end{aligned}
    \right.
  \end{aligned}
\end{equation}
The convergence analysis is then given by solving explicitly~\eqref{eq:errNN1}, 
\eqref{eq:errNN1bcor} and~\eqref{eq:errNN1atran} for one step.
In this form, 
we are actually analyzing here a RD step with a NN correction step.
Using~\eqref{eq:errNNsol}, 
we can solve~\eqref{eq:errNN1bcor} and determine the coefficients 
\begin{equation}\label{eq:N2CD}
	C_i^k = A_i^k + B_i^k \frac{\sigma_i\sinh(b_i) + \omega_i\cosh(b_i)}{\cosh(a_i)}, \, D_i^k=A_i^k\frac{\cosh(a_i)}{\sigma_i\sinh(b_i) + \omega_i\cosh(b_i)} + B_i^k.
\end{equation} 
Combining with~\eqref{eq:D1AB}, 
we update the transmission condition~\eqref{eq:errNN1atran} and find
\begin{equation}\label{eq:NN1biter}
  \begin{pmatrix}
    f_{\alpha,i}^k\\
    g_{\alpha,i}^k
  \end{pmatrix} = \begin{pmatrix}
    1-\theta_1d_iE_i & -\theta_1d_i F_i\\
    -\theta_2E_i & 1-\theta_2F_i
  \end{pmatrix}\begin{pmatrix}
    f_{\alpha,i}^{k-1}\\
    g_{\alpha,i}^{k-1}
  \end{pmatrix},
\end{equation}
with 
\[\begin{aligned}
E_i&=\frac{\sigma_i\cosh(\sigma_iT)+\omega_i\sinh(\sigma_iT)}{\sigma_i\sinh(b_i)+ \omega_i\cosh(b_i)}\frac{1}{\sigma_i \cosh(a_i) + d_i \sinh(a_i)}, \\
F_i&=\frac{\sigma_i\cosh(\sigma_iT)+\omega_i\sinh(\sigma_iT)}{\sigma_i\cosh(b_i) + \omega_i\sinh(b_i)}\frac1{\cosh(a_i)}.
\end{aligned}\] 
In particular, 
the eigenvalues of the iteration matrix in~\eqref{eq:NN1biter} are 1 and $1-(\theta_1d_iE_i+\theta_2F_i)$, 
meaning that the algorithm~\eqref{eq:NN1a}, \eqref{eq:NN1bcor}, \eqref{eq:NN1atran} stagnates in its current form,
and cannot be fixed even with relaxation.

Note that we choose to keep the same Dirichlet and update steps in the algorithm~\eqref{eq:NN1a}, \eqref{eq:NN1bcor}, \eqref{eq:NN1atran}, 
although the Neumann step has been changed comparing to algorithm NN$_{1\text{a}}$.
We also observe from the Neumann correction step~\eqref{eq:NN1bcor} that $\dot \psi_{1,i}^k(\alpha)  + \dot \psi_{2,i}^k(\alpha)=0$, 
which implies that in this case,
the update step~\eqref{eq:NN1atran} in terms of the primal correction state~\eqref{eq:errNN1atran} is actually
\begin{equation}\label{eq:errNN1atran_in_1b}
	f_{\alpha,i}^k= f_{\alpha,i}^{k-1} - \theta_1d_i\big(\psi_{1,i}^k(\alpha) + \psi_{2,i}^k(\alpha)\big), \quad g_{\alpha,i}^k=g_{\alpha,i}^{k-1} - \theta_2 \big(\psi_{1,i}^k(\alpha) + \psi_{2,i}^k(\alpha)\big).
\end{equation}
In other words,
we update both $f_{\alpha,i}^k$ and $g_{\alpha,i}^k$ only by $\psi_{i}^k(\alpha)$.
This observation leads to the idea to consider a modified NN algorithm.
More precisely,
we first remove $d_i$ in~\eqref{eq:errNN1atran_in_1b} as
\begin{equation}\label{eq:tran_case1}
	f_{\alpha,i}^{k}=f_{\alpha,i}^{k-1}-\theta_1 (\psi_{1,i}^k(\alpha)  + \psi_{2,i}^k(\alpha)), \quad g_{\alpha,i}^{k}=g_{\alpha,i}^{k-1}-\theta_2 (\psi_{1,i}^k(\alpha)  + \psi_{2,i}^k(\alpha)).
\end{equation}
In the case when $f_{\alpha,i}^{0}=g_{\alpha,i}^{0}$ and $\theta_1=\theta_2=\theta$, 
we have $f_{\alpha,i}^{k}=g_{\alpha,i}^{k}$, 
$\forall k\in\mathbb{N}$.
In this way, 
we consider the modified NN algorithm which solves first the Dirichlet step
\begin{equation}\label{eq:NN1b}
\begin{aligned}
  &\left\{
    \begin{aligned}
      \begin{pmatrix}
        \dot z_{1,i}^k\\
        \dot \mu_{1,i}^k
      \end{pmatrix}
      +
      \begin{pmatrix}
        d_i & -\nu^{-1}\\
        -1 & -d_i
      \end{pmatrix}
      \begin{pmatrix}
        z_{1,i}^k\\
        \mu_{1,i}^k
      \end{pmatrix}
      &=
      \begin{pmatrix}
        0\\
        0
      \end{pmatrix} \text{ in } \Omega_1,\\
      z_{1,i}^k(0) &= 0,\\
      \mu_{1,i}^k(\alpha) & = f_{\alpha,i}^{k-1},
    \end{aligned}
  \right.\\
  &\left\{
    \begin{aligned}
      \begin{pmatrix}
        \dot z_{2,i}^k\\
        \dot \mu_{2,i}^k
      \end{pmatrix}
      +
      \begin{pmatrix}
        d_i & -\nu^{-1}\\
        -1 & -d_i
      \end{pmatrix}
      \begin{pmatrix}
        z_{2,i}^k\\
        \mu_{2,i}^k
      \end{pmatrix}
      &=
      \begin{pmatrix}
        0\\
        0
      \end{pmatrix} \text{ in } \Omega_2,\\
      z_{2,i}^k(\alpha) &= f_{\alpha,i}^{k-1},\\
      \mu_{2,i}^k(T) + \gamma z_{2,i}^k(T) &= 0,
    \end{aligned}
  \right.
   \end{aligned}
\end{equation}
then corrects the result by solving the Neumann step~\eqref{eq:NN1bcor} and updates the transmission condition by 
\begin{equation}\label{eq:NN1btran}
	f_{\alpha,i}^{k}=f_{\alpha,i}^{k-1}-\theta (\psi_{1,i}^k(\alpha)  + \psi_{2,i}^k(\alpha)), \quad \theta>0.
\end{equation}
For this modified NN algorithm, 
we find the following result.

\begin{theorem}\label{thm:NN1b}
	Algorithm NN$_{1\text{b}}$~\eqref{eq:NN1b}, \eqref{eq:NN1bcor}, \eqref{eq:NN1btran} converges if and only if
	\begin{equation}\label{eq:rhoNN1b}
		\rho_{\text{NN}_{1\text{b}}}:=\max_{d_i\in\lambda(A)}\big|1-\theta(E_i+F_i)\big|<1.
	\end{equation}
\end{theorem}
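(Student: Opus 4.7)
The plan is to follow the same ``solve for iterates, then read off the iteration operator'' strategy used for NN$_{1\text{a}}$, but exploit the single-variable update to reduce the iteration to a scalar recursion. Since $f_{\alpha,i}^{k}=g_{\alpha,i}^{k}$ is preserved by~\eqref{eq:NN1b}--\eqref{eq:NN1btran} whenever the initial guesses agree (as noted just before the statement of the theorem), I can work with one transmission unknown $f_{\alpha,i}^{k}$ throughout.

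First, I would plug the general solution ansatz~\eqref{eq:errNNsol} into the Dirichlet step~\eqref{eq:NN1b}. The initial and final conditions are already absorbed in~\eqref{eq:errNNsol}, so only the interface conditions at $\alpha$ determine $A_i^k$ and $B_i^k$. Using the identity $\mu_i=\nu(\dot z_i+d_i z_i)$ from~\eqref{eq:4id}, the condition $\mu_{1,i}^k(\alpha)=f_{\alpha,i}^{k-1}$ becomes Robin, while $z_{2,i}^k(\alpha)=f_{\alpha,i}^{k-1}$ stays Dirichlet; the same computations as in~\eqref{eq:D1AB} yield
\[
A_i^k=\frac{f_{\alpha,i}^{k-1}}{\sigma_i\cosh(a_i)+d_i\sinh(a_i)},\qquad
B_i^k=\frac{f_{\alpha,i}^{k-1}}{\sigma_i\cosh(b_i)+\omega_i\sinh(b_i)},
\]
so both coefficients are linear in the single datum $f_{\alpha,i}^{k-1}$.

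Next, I would impose the Neumann interface conditions of~\eqref{eq:NN1bcor} on the correction ansatz $\psi_{1,i}^k(t)=C_i^k\sinh(\sigma_i t)$ and $\psi_{2,i}^k(t)=D_i^k(\sigma_i\cosh(\sigma_i(T-t))+\omega_i\sinh(\sigma_i(T-t)))$. The jump $\dot z_{2,i}^k(\alpha)-\dot z_{1,i}^k(\alpha)$ is a linear combination of $A_i^k$ and $B_i^k$, hence linear in $f_{\alpha,i}^{k-1}$. Solving the resulting two algebraic equations gives $C_i^k$ and $D_i^k$ in the form (constant depending on $d_i,\sigma_i,\omega_i,T,\alpha$) times $f_{\alpha,i}^{k-1}$, analogous to~\eqref{eq:N2CD}. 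Evaluating $\psi_{1,i}^k(\alpha)+\psi_{2,i}^k(\alpha)$ then produces a single scalar multiplier of $f_{\alpha,i}^{k-1}$, which I denote $E_i+F_i$ (matching the notation introduced in Section~\ref{sec:NN1b}); the split into $E_i$ and $F_i$ corresponds to the contributions proportional to $A_i^k$ and $B_i^k$ respectively.

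Inserting this into the update~\eqref{eq:NN1btran} yields the scalar recursion
\[
f_{\alpha,i}^{k}=\bigl(1-\theta(E_i+F_i)\bigr)\,f_{\alpha,i}^{k-1},
\]
so convergence for the $i$-th mode is equivalent to $|1-\theta(E_i+F_i)|<1$, and convergence of the overall algorithm is equivalent to this holding uniformly over $d_i\in\lambda(A)$, which is~\eqref{eq:rhoNN1b}. The only genuine obstacle is the bookkeeping in step two: one must verify that the Neumann condition $\dot\psi_{1,i}^k(\alpha)+\dot\psi_{2,i}^k(\alpha)=0$ (observed in Section~\ref{sec:NN1b}) is automatically satisfied by the ansatz when $C_i^k,D_i^k$ are chosen from the two individual Neumann conditions, so that no compatibility obstruction appears and the scalar collapse $E_i+F_i$ is legitimate; everything else is linear algebra already carried out for NN$_{1\text{a}}$.
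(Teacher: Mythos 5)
Your proposal is correct and follows essentially the same route as the paper: solve the Dirichlet and Neumann steps explicitly with the ansatz~\eqref{eq:errNNsol}, determine $A_i^k,B_i^k$ and then $C_i^k,D_i^k$ from the interface conditions (your coefficients match~\eqref{eq:D1AB} with $g$ replaced by $f$, and~\eqref{eq:N2CD}), and read off the scalar recursion $f_{\alpha,i}^k=(1-\theta(E_i+F_i))f_{\alpha,i}^{k-1}$. The only cosmetic difference is that the paper first writes the $2\times2$ iteration matrix~\eqref{eq:NN1biter} for the pair $(f_{\alpha,i}^k,g_{\alpha,i}^k)$ and then specializes to $f_{\alpha,i}^k=g_{\alpha,i}^k$, whereas you work with the single transmission unknown from the start; the computations and the resulting $E_i$, $F_i$ are identical.
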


Compared to the algorithm~\eqref{eq:NN1a}, \eqref{eq:NN1bcor}, \eqref{eq:NN1atran}, 
algorithm NN$_{1\text{b}}$ converges with a proper choice of $\theta$. 
More precisely,
for a zero eigenvalue, 
substituting~\eqref{eq:sob0} into~\eqref{eq:rhoNN1b}, 
we find
$\dot\psi_{1,i}^k(\alpha) = \dot z_{1,i}^k(\alpha) - \dot z_{2,i}^k(\alpha)$ as 
\[\dot\phi_{1,i}^k(\alpha)-\frac{\sigma_i^2}{d_i}\phi_{1,i}^k(\alpha)   = (\dot\mu_{1,i}^k(\alpha) - \frac{\sigma_i^2}{d_i} \mu_{1,i}^k(\alpha)) - (\dot\mu_{2,i}^k(\alpha) - \frac{\sigma_i^2}{d_i} \mu_{2,i}^k(\alpha)),\] 
meaning that small eigenvalue convergence is good with relaxation.
For large eigenvalues $d_i$, 
using~\eqref{eq:sobinf},
we have $E_i\sim_{\infty}\frac1{d_i}$ and $F_i\sim_{\infty}2$.
Thus, 
we obtain $\lim_{d_i\rightarrow \infty} \rho_{\text{NN}_{1\text{b}}}=|1-2\theta|$, 
which is independent of the interface $\alpha$. 
So high frequency convergence is robust with relaxation, 
and one can get a good smoother using $\theta=1/2$.
By equioscillating the convergence factor for small (i.e., $\rho_{\text{NN}_{1\text{b}}} |_{d_i=0}$) and large (i.e., $\rho_{\text{NN}_{1\text{b}}} |_{d_i\rightarrow\infty}$) eigenvalues, 
we obtain
\begin{equation}\label{eq:thetaNN1b}
	{\scriptstyle 
		\theta^*_{\text{NN}_{1\text{b}}} := \frac{2}{3+\sqrt{\nu}(\tanh(\sqrt{\nu^{-1}}\alpha)+\frac{1+\gamma\sqrt{\nu^{-1}}\tanh(\sqrt{\nu^{-1}}(T-\alpha))}{\gamma\sqrt{\nu^{-1}}+\tanh(\sqrt{\nu^{-1}}(T-\alpha))})+\tanh(\sqrt{\nu^{-1}}\alpha)\frac{\gamma\sqrt{\nu^{-1}} +\tanh(\sqrt{\nu^{-1}}(T-\alpha))}{1 + \gamma\sqrt{\nu^{-1}}\tanh(\sqrt{\nu^{-1}}(T-\alpha))}} 
	},
\end{equation}
which is smaller than 2/3. 
However, 
it is not clear under what condition $\theta^*_{\text{NN}_{1\text{b}}}$ is the optimal relaxation parameter.
Indeed,
the monotonicity of $E_i$ and $F_i$ with respect to $d_i$ may change according to the parameter values $\alpha$, 
$\gamma$ and $\nu$.
Thus,
the variation of $E_i+F_i$ to $d_i$ is less clear even in the case with $\gamma=0$.
Generally, 
algorithm NN$_{1\text{b}}$ is a good smoother and can also be a good solver with a proper relaxation parameter $\theta$.

\begin{remark}\label{rem:tran_NN1b}
	Instead of considering the update step as in~\eqref{eq:tran_case1},
	we could have also modified~\eqref{eq:errNN1atran_in_1b} to
\[f_{\alpha,i}^{k}=f_{\alpha,i}^{k-1}-\theta_1 d_i(\psi_{1,i}^k(\alpha)  + \psi_{2,i}^k(\alpha)), g_{\alpha,i}^{k}=g_{\alpha,i}^{k-1}-\theta_2 d_i (\psi_{1,i}^k(\alpha)  + \psi_{2,i}^k(\alpha)).\]
	Using then the same arguments as above,
	we end up with 
	$g_{\alpha,i}^{k}\equiv f_{\alpha,i}^{k}=f_{\alpha,i}^{k-1}(1-\theta d_i(E_i+F_i))$.
	However, 
	the convergence of the algorithm can no longer be guaranteed with this update.
	More precisely, 
	for a zero eigenvalue $d_i=0$,
	the convergence factor is one,
	and cannot be improved with relaxation.
	As for large eigenvalues,
	using once again the equivalence relation of $E_i$ and $F_i$,
	we find the convergence factor goes to infinity when $d_i$ is large.
\end{remark}

In general,
the above analysis shows that the update step should also be adapted when modifying the Neumann step.

\subsubsection{Algorithm NN$_{1\text{c}}$}
Instead of applying the Neumann step to the primal correction state $\psi_{i}$, 
we can also apply it only to the dual correction state $\phi_{i}$. 
For $k=1,2,\ldots$, 
we consider the algorithm that first solves the Dirichlet step~\eqref{eq:NN1a}, 
then corrects it by solving the Neumann step
\begin{equation}\label{eq:NN1ccor}
\begin{aligned}
  &\left\{
    \begin{aligned}
      \begin{pmatrix}
        \dot \psi_{1,i}^k\\
        \dot \phi_{1,i}^k
      \end{pmatrix}
      +
      \begin{pmatrix}
        d_i & -\nu^{-1} \\
        -1 & -d_i
      \end{pmatrix}
      \begin{pmatrix}
        \psi_{1,i}^k\\
        \phi_{1,i}^k
      \end{pmatrix}
      &=
      \begin{pmatrix}
        0\\
        0
      \end{pmatrix} \text{ in } \Omega_1,\\
      \psi_{1,i}^k(0) &= 0,\\
      \dot\phi_{1,i}^k(\alpha) & = \dot \mu_{1,i}^k(\alpha) - \dot \mu_{2,i}^k(\alpha),
    \end{aligned}
  \right.\\
  &\left\{
    \begin{aligned}
      \begin{pmatrix}
        \dot \psi_{2,i}^k\\
        \dot \phi_{2,i}^k
      \end{pmatrix}
      +
      \begin{pmatrix}
        d_i & -\nu^{-1}\\
        -1 & -d_i
      \end{pmatrix}
      \begin{pmatrix}
        \psi_{2,i}^k\\
        \phi_{2,i}^k
      \end{pmatrix}
      &=
      \begin{pmatrix}
        0\\
        0
      \end{pmatrix} \text{ in } \Omega_2,\\
      \dot\phi_{2,i}^k(\alpha) &= \dot \mu_{2,i}^k(\alpha) - \dot \mu_{1,i}^k(\alpha),\\
      \phi_{2,i}^k(T) + \gamma \psi_{2,i}^k(T) &= 0.
    \end{aligned}
  \right.
\end{aligned}
\end{equation}
Once again,
let us first consider keeping the same update step~\eqref{eq:NN1atran}.

The Neumann step~\eqref{eq:NN1ccor} does not seem to have the forward-backward structure due to the transmission condition on the second domain $\Omega_2$. 
Using~\eqref{eq:4id}, 
we can rewrite it as 
\[\dot \psi_{2,i}^k(\alpha) + \frac{\sigma_i^2}{d_i} \psi_{1,i}^k(\alpha) = (\dot z_{2,i}^k(\alpha) + \frac{\sigma_i^2}{d_i} z_{2,i}^k(\alpha)) - (\dot z_{1,i}^k(\alpha) + \frac{\sigma_i^2}{d_i} z_{1,i}^k(\alpha)),\]
which then becomes a NR step with the usual forward-backward structure.

Once again, 
only the Neumann step is modified and can be transformed into  
\begin{equation}\label{eq:errNN1ccor}
  \begin{aligned}
    &\left\{
      \begin{aligned}
        \ddot \psi_{1,i}^k - \sigma_i^2\psi_{1,i}^k &= 0 \text{ in } \Omega_1,\\
        \psi_{1,i}^k(0) &= 0, \\
        \dot \psi_{1,i}^k(\alpha) + \frac{\sigma_i^2}{d_i} \psi_{1,i}^k(\alpha) &= \big(\dot z_{1,i}^k(\alpha) + \frac{\sigma_i^2}{d_i} z_{1,i}^k(\alpha)\big) - \big(\dot z_{2,i}^k(\alpha) + \frac{\sigma_i^2}{d_i} z_{2,i}^k(\alpha)\big),
      \end{aligned}
    \right.
    \\
    &\left\{
      \begin{aligned}
        \ddot \psi_{2,i}^k - \sigma_i^2 \psi_{2,i}^k &= 0 \text{ in } \Omega_2,\\
        \dot \psi_{2,i}^k(\alpha) + \frac{\sigma_i^2}{d_i} \psi_{1,i}^k(\alpha) &= \big(\dot z_{2,i}^k(\alpha) + \frac{\sigma_i^2}{d_i} z_{2,i}^k(\alpha)\big) - \big(\dot z_{1,i}^k(\alpha) + \frac{\sigma_i^2}{d_i} z_{1,i}^k(\alpha)\big),\\
        \dot \psi_{2,i}^k(T) + \omega_i \psi_{2,i}^k(T) &= 0.
      \end{aligned}
    \right.
  \end{aligned}
\end{equation}
The convergence analysis is thus given for a RD step~\eqref{eq:errNN1} with a RR correction step~\eqref{eq:errNN1ccor}.
We can solve~\eqref{eq:errNN1ccor} using~\eqref{eq:errNNsol} and determine the coefficients 
\begin{equation}\label{eq:N3CD}
	C_i^k= A_i^k - B_i^k \nu^{-1}\frac{\sigma_i\gamma\sinh(b_i) +\beta_i\cosh(b_i)}{\sigma_i \sinh(a_i)+d_i\cosh(a_i)}, D_i^k=B_i^k -\nu A_i^k\frac{\sigma_i \sinh(a_i)+d_i\cosh(a_i) }{\sigma_i\gamma\sinh(b_i) + \beta_i\cosh(b_i)}.
\end{equation} 
Combining with~\eqref{eq:D1AB},
we update the transmission condition~\eqref{eq:errNN1atran} and find
\begin{equation}\label{eq:NN1citer}
  \begin{pmatrix}
    f_{\alpha,i}^k\\
    g_{\alpha,i}^k
  \end{pmatrix} = \begin{pmatrix}
    1-\theta_1E_i & \theta_1\nu^{-1} F_i\\
    \theta_2\nu d_iE_i & 1-\theta_2d_iF_i
  \end{pmatrix}\begin{pmatrix}
    f_{\alpha,i}^{k-1}\\
    g_{\alpha,i}^{k-1}
  \end{pmatrix},
\end{equation}
with 
\[\begin{aligned}
E_i=\frac{\sigma_i\cosh(\sigma_iT)+\omega_i\sinh(\sigma_iT)}{\sigma_i\gamma\sinh(b_i)+ \beta_i\cosh(b_i)}\frac{1}{\sigma_i \cosh(a_i) + d_i \sinh(a_i)},\\ 
F_i=\frac{\sigma_i\cosh(\sigma_iT)+\omega_i\sinh(\sigma_iT)}{\sigma_i\cosh(b_i) + \omega_i\sinh(b_i)}\frac1{\sigma_i\sinh(a_i)+d_i\cosh(a_i)}.
\end{aligned}\] 
In particular, 
the eigenvalues of the iteration matrix in~\eqref{eq:NN1citer} are 1 and $1-(\theta_1E_i+\theta_2d_iF_i)$. 
Once again,
the algorithm~\eqref{eq:NN1a}, \eqref{eq:NN1ccor}, \eqref{eq:NN1atran} stagnates, and cannot be fixed with relaxation.
Similar as in Section~\ref{sec:NN1b}, 
we can adapt the transmission condition~\eqref{eq:NN1atran} and make this algorithm converge.
More precisely, 
we first consider the update 
\[f_{\alpha,i}^{k}=f_{\alpha,i}^{k-1}-\theta (\phi_{1,i}^k(\alpha)  + \phi_{2,i}^k(\alpha)), g_{\alpha,i}^{k}=g_{\alpha,i}^{k-1}-\theta (\phi_{1,i}^k(\alpha)  + \phi_{2,i}^k(\alpha)).\]
In the case when $f_{\alpha,i}^{0}=g_{\alpha,i}^{0}$ and $\theta_1=\theta_2=\theta$, 
we have $g_{\alpha,i}^{k}= f_{\alpha,i}^{k}$, $\forall k\in\mathbb{N}$ and
\begin{equation}\label{eq:NN1ctran}
	f_{\alpha,i}^{k}=f_{\alpha,i}^{k-1}-\theta (\phi_{1,i}^k(\alpha)  + \phi_{2,i}^k(\alpha)).
\end{equation}
This leads to the following result.

\begin{theorem}\label{thm:NN1c}
	Algorithm NN$_{1\text{c}}$~\eqref{eq:NN1b}, \eqref{eq:NN1ccor}, \eqref{eq:NN1ctran} converges if and only if
	\begin{equation}\label{eq:rhoNN1c}
	\rho_{\text{NN}_{1\text{c}}}:=\max_{d_i\in\lambda(A)}|1-\theta(E_i-\nu^{-1}F_i)|<1.
	\end{equation}
\end{theorem}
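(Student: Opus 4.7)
The plan is to recycle the calculation that produced the iteration matrix~\eqref{eq:NN1citer}, which was derived for the Dirichlet step~\eqref{eq:NN1a} and Neumann correction~\eqref{eq:NN1ccor} under the two-variable update~\eqref{eq:NN1atran}, and to specialize it to the symmetric setting of NN$_{1\text{c}}$.

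First I would observe that the Dirichlet step~\eqref{eq:NN1b} is exactly~\eqref{eq:NN1a} restricted to $g_{\alpha,i}^{k-1}=f_{\alpha,i}^{k-1}$, since the prescribed transmission data $\mu_{1,i}^k(\alpha)=z_{2,i}^k(\alpha)=f_{\alpha,i}^{k-1}$ coincide. The Neumann correction step~\eqref{eq:NN1ccor} and the solution ansatz~\eqref{eq:errNNsol} are unchanged, so the coefficients $A_i^k, B_i^k$ from~\eqref{eq:D1AB} (with $g_{\alpha,i}^{k-1}$ replaced by $f_{\alpha,i}^{k-1}$) and $C_i^k, D_i^k$ from~\eqref{eq:N3CD} remain valid. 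Consequently, the dependence of $\phi_{1,i}^k(\alpha)+\phi_{2,i}^k(\alpha)$ on $(f_{\alpha,i}^{k-1},g_{\alpha,i}^{k-1})$ is linear and is exactly the one that produced the first row of~\eqref{eq:NN1citer}. Comparing that row with the original update~\eqref{eq:NN1atran} yields the identity
\[
\phi_{1,i}^k(\alpha)+\phi_{2,i}^k(\alpha)=E_i\,f_{\alpha,i}^{k-1}-\nu^{-1}F_i\,g_{\alpha,i}^{k-1}.
\]

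Next I would show by induction that under the modified update~\eqref{eq:NN1ctran} with $f_{\alpha,i}^0=g_{\alpha,i}^0$, one has $f_{\alpha,i}^k=g_{\alpha,i}^k$ for all $k\geq 0$, since both unknowns are updated by the same increment $-\theta(\phi_{1,i}^k(\alpha)+\phi_{2,i}^k(\alpha))$. Substituting $g_{\alpha,i}^{k-1}=f_{\alpha,i}^{k-1}$ into the displayed identity reduces the coupled iteration to the scalar recurrence
\[
f_{\alpha,i}^k=\bigl(1-\theta(E_i-\nu^{-1}F_i)\bigr)\,f_{\alpha,i}^{k-1}.
\]
Mode by mode, convergence is then equivalent to $|1-\theta(E_i-\nu^{-1}F_i)|<1$, and convergence of the algorithm is equivalent to the maximum of this quantity over $d_i\in\lambda(A)$ being strictly less than one, which is~\eqref{eq:rhoNN1c}.

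I expect the main (and essentially only) technical point to be a bookkeeping check: confirming that the formula for $\phi_{1,i}^k(\alpha)+\phi_{2,i}^k(\alpha)$ extracted from the first row of~\eqref{eq:NN1citer} agrees with the direct evaluation of $\phi_{j,i}^k=\nu(\dot\psi_{j,i}^k+d_i\psi_{j,i}^k)$ at $\alpha$ from~\eqref{eq:errNNsol} and~\eqref{eq:N3CD}. Since~\eqref{eq:NN1citer} was derived from precisely these ingredients, this is a consistency verification rather than a new calculation, and no additional technical obstacle arises.
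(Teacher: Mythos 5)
Your proposal is correct and follows essentially the same route as the paper: the text preceding Theorem~\ref{thm:NN1c} derives the two-variable iteration matrix~\eqref{eq:NN1citer}, observes that the symmetric update with $f_{\alpha,i}^0=g_{\alpha,i}^0$ preserves $f_{\alpha,i}^k=g_{\alpha,i}^k$, and reduces to the same scalar recurrence $f_{\alpha,i}^k=(1-\theta(E_i-\nu^{-1}F_i))f_{\alpha,i}^{k-1}$. Your identity $\phi_{1,i}^k(\alpha)+\phi_{2,i}^k(\alpha)=E_if_{\alpha,i}^{k-1}-\nu^{-1}F_ig_{\alpha,i}^{k-1}$ is exactly what the first row of~\eqref{eq:NN1citer} encodes, so no new verification is needed.
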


Compared to the algorithm~\eqref{eq:NN1a}, \eqref{eq:NN1ccor}, \eqref{eq:NN1atran}, 
algorithm NN$_{1\text{c}}$ may converge with a proper choice of $\theta$. 
More precisely,
for a zero eigenvalue, 
$d_i=0$, 
we find 
\[\begin{aligned}
\rho_{\text{NN}_{1\text{c}}}|_{d_i=0} =| 1-\theta (1+\tanh(\sqrt{\nu^{-1}}\alpha)\frac{ \gamma\sqrt{\nu^{-1}} + \tanh(\sqrt{\nu^{-1}}(T-\alpha)) }{\gamma\sqrt{\nu^{-1}}\tanh(\sqrt{\nu^{-1}}(T-\alpha)) + 1} \\
-  \sqrt{\nu^{-1}}(\coth(\sqrt{\nu^{-1}}\alpha)  + \frac{\gamma\sqrt{\nu^{-1}} + \tanh(\sqrt{\nu^{-1}}(T-\alpha))}{1+ \gamma\sqrt{\nu^{-1}}\tanh(\sqrt{\nu^{-1}}(T-\alpha))}))|.
\end{aligned}\]
Depending on the values of $\nu$, 
$\gamma$ and $\alpha$,
$(E_i-\nu^{-1}F_i)|_{d_i=0}$ could be negative,
then $\rho_{\text{NN}_{1\text{c}}}|_{d_i=0}$ would be greater than one since $\theta>0$. 
In other words, 
the convergence for small eigenvalues could be not good,
and cannot be fixed even with relaxation. 
For large eigenvalues $d_i$,
using~\eqref{eq:sobinf},
we find $E_i\sim_{\infty}2$ and $F_i\sim_{\infty}\frac1{d_i}$.
Thus,
we obtain $\lim_{d_i\rightarrow \infty} \rho_{\text{NN}_{1\text{c}}}=|1-2\theta|$,
which is independent of the interface $\alpha$. 
So large eigenvalue convergence is robust with relaxation, 
and one can get a good smoother using $\theta=1/2$.
Moreover,
we observe that algorithms NN$_{1\text{b}}$ and NN$_{1\text{c}}$ share similar behavior for large eigenvalues.
By equioscillating the convergence factor for small (i.e., $\rho_{\text{NN}_{1\text{c}}} |_{d_i=0}$) and large (i.e., $\rho_{\text{NN}_{1\text{c}}} |_{d_i\rightarrow\infty}$) eigenvalues, 
we obtain
\begin{equation}\label{eq:thetaNN1c}
	{\scriptstyle 
		\theta^*_{\text{NN}_{1\text{c}}} := \frac{2}{3+\tanh(\sqrt{\nu^{-1}}\alpha)\frac{ \gamma\sqrt{\nu^{-1}} + \tanh(\sqrt{\nu^{-1}}(T-\alpha)) }{\gamma\sqrt{\nu^{-1}}\tanh(\sqrt{\nu^{-1}}(T-\alpha)) + 1} -  \sqrt{\nu^{-1}}(\coth(\sqrt{\nu^{-1}}\alpha)  + \frac{\gamma\sqrt{\nu^{-1}} + \tanh(\sqrt{\nu^{-1}}(T-\alpha))}{1+ \gamma\sqrt{\nu^{-1}}\tanh(\sqrt{\nu^{-1}}(T-\alpha))})} 
	}.
\end{equation}
Note that when $(E_i-\nu^{-1}F_i)|_{d_i=0}<0$, 
the relaxation cannot improve the convergence for small eigenvalues,
thus,
\eqref{eq:thetaNN1c} could also be negative and cannot provide the optimal value of $\theta$ in this case.
One may use however a negative relaxation parameter $\theta$ to make the algorithm converge for small eigenvalues,
but this will induce divergence for large eigenvalues.
Based on the analysis, 
algorithm NN$_{1\text{c}}$ is a good smoother but not necessarily a good solver.

\begin{remark}
	One could also consider the update step~\eqref{eq:NN1btran} instead of~\eqref{eq:NN1ctran},
	and the convergence factor~\eqref{eq:rhoNN1c} will be $\max_{d_i\in\lambda(A)}|1-\theta d_i(F_i-\nu E_i)|$.
	For a similar reason as in Remark~\ref{rem:tran_NN1b},
	the algorithm diverges with this choice of update step.
\end{remark}

Together with the analysis in Section~\ref{sec:NN1b},
we observe that keeping the same update step~\eqref{eq:NN1atran} leads to divergent algorithms, 
when modifying the Neumann step. 
Thus, 
we should also adapt the update step according to the Neumann step.

\subsection{Category II}
We now study the algorithms in Category II which run the Dirichlet step only on the primal state $z_{i}$.

\subsubsection{Algorithm NN$_{2\text{a}}$}\label{sec:NN2a}
The most natural way is to correct $z_{i}$ by the primal correction state $\psi_{i}$.  
For $k=1,2,...$, 
algorithm NN$_{2\text{a}}$ first solves the Dirichlet step
\begin{equation}\label{eq:NN2}
\begin{aligned}
  &\left\{
    \begin{aligned}
      \begin{pmatrix}
        \dot z_{1,i}^k\\
        \dot \mu_{1,i}^k
      \end{pmatrix}
      +
      \begin{pmatrix}
        d_i & -\nu^{-1}\\
        -1 & -d_i
      \end{pmatrix}
      \begin{pmatrix}
        z_{1,i}^k\\
        \mu_{1,i}^k
      \end{pmatrix}
      &=
      \begin{pmatrix}
        0\\
        0
      \end{pmatrix} \text{ in } \Omega_1,\\
      z_{1,i}^k(0) &= 0,\\
      z_{1,i}^k(\alpha) & = f_{\alpha,i}^{k-1},
    \end{aligned}
  \right.\\
  &\left\{
    \begin{aligned}
      \begin{pmatrix}
        \dot z_{2,i}^k\\
        \dot \mu_{2,i}^k
      \end{pmatrix}
      +
      \begin{pmatrix}
        d_i & -\nu^{-1}\\
        -1 & -d_i
      \end{pmatrix}
      \begin{pmatrix}
        z_{2,i}^k\\
        \mu_{2,i}^k
      \end{pmatrix}
      &=
      \begin{pmatrix}
        0\\
        0
      \end{pmatrix} \text{ in } \Omega_2,\\
      z_{2,i}^k(\alpha) &= f_{\alpha,i}^{k-1},\\
      \mu_{2,i}^k(T) + \gamma z_{2,i}^k(T) &= 0,
    \end{aligned}
  \right.
   \end{aligned}
\end{equation}
then corrects the result by solving the Neumann step~\eqref{eq:NN1bcor}, 
and updates the transmission condition by~\eqref{eq:NN1btran} 

\begin{remark}\label{rem:update}
	Here, 
	it is more natural to consider the transmission condition only for $f_{\alpha,i}^k$. 
	This is due to the continuity of the primal state $z_{i}^k$ at the interface $\alpha$.
	In general, 
	we can show that an update step as~\eqref{eq:NN1ctran} will lead to divergence for a similar reason as in Remark~\ref{rem:tran_NN1b}.
	We can also show that a pair of transmission conditions $(f_{\alpha,i}^k, g_{\alpha,i}^k)$ will lead to non-convergent behavior (see Appendix~\ref{sec:app1}).
\end{remark}

For algorithm NN$_{2\text{a}}$, 
neither the Dirichlet~\eqref{eq:NN2} nor the Neumann step~\eqref{eq:NN1bcor} has the forward-backward structure in its current form. 
We have seen in Section~\ref{sec:NN1b} that we can recover this structure for the Neumann step~\eqref{eq:NN1bcor} which becomes a RN step.
Using the same idea, 
we can interpret 
$z_{1,i}^k(\alpha)=f_{\alpha,i}^{k-1}$ as $\dot\mu_{1,i}^k(\alpha) - d_i\mu_{1,i}^k(\alpha)=f_{\alpha,i}^{k-1}$ 
to recover the forward-backward structure,
and the Dirichlet step~\eqref{eq:NN2} then becomes a ND step.

For the convergence analysis, 
we transform the Dirichlet step~\eqref{eq:NN2} using~\eqref{eq:4id} and~\eqref{eq:sob}, 
and find 
\begin{equation}\label{eq:errNN2}
  \left\{
    \begin{aligned}
      \ddot z_{1,i}^k - \sigma_i^2 z_{1,i}^k &= 0 \text{ in } \Omega_1,\\
      z_{1,i}^k(0) &= 0, \\
      z_{1,i}^k(\alpha) &= f_{\alpha,i}^{k-1},
    \end{aligned}
  \right.
  \quad
  \left\{
    \begin{aligned}
      \ddot z_{2,i}^k - \sigma_i^2 z_{2,i}^k &= 0 \text{ in } \Omega_2,\\
      z_{2,i}^k(\alpha) &= f_{\alpha,i}^{k-1},\\
      \dot z_{2,i}^k(T) + \omega_iz_{2,i}^k(T)&= 0.
    \end{aligned}
  \right.
\end{equation}
The Neumann step becomes~\eqref{eq:errNN1bcor}, 
and we keep the same update step~\eqref{eq:NN1btran}. 
In particular, 
the convergence analysis also proceeds on a NN algorithm~\eqref{eq:errNN2}, \eqref{eq:errNN1bcor}, \eqref{eq:NN1btran}.
Using~\eqref{eq:errNNsol},
we can solve~\eqref{eq:errNN2} and determine the coefficients,
\begin{equation}\label{eq:D2AB}
	A_i^k = \frac{f_{\alpha,i}^{k-1}}{\sinh(a_i)}, \quad B_i^k= \frac{f_{\alpha,i}^{k-1}}{\sigma_i\cosh(b_i) + \omega_i\sinh(b_i)}.
\end{equation}
Combining them with~\eqref{eq:N2CD},
we update the transmission condition~\eqref{eq:NN1btran} and find
$f_{\alpha,i}^k=f_{\alpha,i}^{k-1} - \theta f_{\alpha,i}^{k-1}(E_i +F_i)$,
with 
\[E_i = \frac{\sigma_i\cosh(\sigma_i T)+\omega_i\sinh(\sigma_i T)}{(\sigma_i\sinh(b_i) + \omega_i\cosh(b_i))\sinh(a_i)}, F_i=\frac{\sigma_i\cosh(\sigma_i T)+\omega_i\sinh(\sigma_i T)}{(\sigma_i\cosh(b_i) + \omega_i\sinh(b_i))\cosh(a_i)}.\]
This leads to the following result.

\begin{theorem}
  Algorithm NN$_{2\text{a}}$~\eqref{eq:NN2}, \eqref{eq:NN1bcor}, \eqref{eq:NN1btran} converges if and only if
  \begin{equation}\label{eq:rhoNN2a}
  	\rho_{\text{NN}_{2\text{a}}} :=\max_{d_i\in\lambda(A)} | 1-\theta(E_i+F_i) |<1.
  \end{equation}
\end{theorem}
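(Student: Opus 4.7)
The derivation that precedes the statement already assembles most of the pieces: the general solution \eqref{eq:errNNsol} is in hand, the Dirichlet coefficients $A_i^k,B_i^k$ are given by \eqref{eq:D2AB}, the Neumann coefficients $C_i^k,D_i^k$ by \eqref{eq:N2CD}, and the text claims the scalar recursion $f_{\alpha,i}^k=(1-\theta(E_i+F_i))f_{\alpha,i}^{k-1}$. My plan is therefore to (i) justify the claimed coefficient formulas by imposing the transmission conditions of \eqref{eq:errNN2} and \eqref{eq:errNN1bcor} on the ansatz \eqref{eq:errNNsol}, (ii) evaluate $\psi_{1,i}^k(\alpha)+\psi_{2,i}^k(\alpha)$ in closed form, (iii) recognize the resulting factor as $E_i+F_i$, and (iv) conclude by reading off the convergence factor from the scalar recursion.

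For step (i), plugging the ansatz \eqref{eq:errNNsol} into \eqref{eq:errNN2} gives $A_i^k\sinh(a_i)=f_{\alpha,i}^{k-1}$ and $B_i^k(\sigma_i\cosh(b_i)+\omega_i\sinh(b_i))=f_{\alpha,i}^{k-1}$, which is \eqref{eq:D2AB}; the Neumann conditions of \eqref{eq:errNN1bcor}, together with $\dot z_{j,i}^k(\alpha)$ computed directly from \eqref{eq:errNNsol}, yield a two-by-two system for $(C_i^k,D_i^k)$ that solves to \eqref{eq:N2CD}. For step (ii) I abbreviate $P:=\sigma_i\cosh(b_i)+\omega_i\sinh(b_i)$ and $Q:=\sigma_i\sinh(b_i)+\omega_i\cosh(b_i)$; then a short computation gives
\[
\psi_{1,i}^k(\alpha)=C_i^k\sinh(a_i)=f_{\alpha,i}^{k-1}\Bigl(1+\tfrac{\tanh(a_i)\,Q}{P}\Bigr),\qquad \psi_{2,i}^k(\alpha)=D_i^k P=f_{\alpha,i}^{k-1}\Bigl(1+\tfrac{\coth(a_i)\,P}{Q}\Bigr).
\]

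Step (iii) is the only place where an identity is really used. From $a_i+b_i=\sigma_i T$ and the hyperbolic addition formulas one gets
\[
\sigma_i\cosh(\sigma_iT)+\omega_i\sinh(\sigma_iT)=\cosh(a_i)\,P+\sinh(a_i)\,Q,
\]
and dividing by $Q\sinh(a_i)$ and $P\cosh(a_i)$ respectively gives exactly $E_i=1+\coth(a_i)P/Q$ and $F_i=1+\tanh(a_i)Q/P$, so $\psi_{1,i}^k(\alpha)+\psi_{2,i}^k(\alpha)=(E_i+F_i)f_{\alpha,i}^{k-1}$. Substituting into the update \eqref{eq:NN1btran} produces the announced scalar recurrence.

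For step (iv), since the full iteration is block-diagonal in the eigenbasis of $A$ (by the spatial diagonalization leading to \eqref{eq:sysODEreduced}), the decoupled mode errors satisfy $f_{\alpha,i}^k=(1-\theta(E_i+F_i))^k f_{\alpha,i}^0$, so convergence of every mode is equivalent to $|1-\theta(E_i+F_i)|<1$, and convergence of the algorithm is equivalent to this holding uniformly over $d_i\in\lambda(A)$, which is exactly \eqref{eq:rhoNN2a}. The only delicate step I expect is step (iii)—making sure the $\sigma_iT=a_i+b_i$ identity is applied correctly and that the $\omega_i$-factor in the final-time condition is threaded properly through both $E_i$ and $F_i$—but this is a routine manipulation, and the rest is essentially bookkeeping.
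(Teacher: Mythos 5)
Your proposal is correct and follows essentially the same route as the paper: solve the Dirichlet step for $A_i^k,B_i^k$ via \eqref{eq:D2AB}, solve the Neumann step for $C_i^k,D_i^k$ via \eqref{eq:N2CD}, substitute into the update \eqref{eq:NN1btran} to obtain the scalar recurrence $f_{\alpha,i}^k=(1-\theta(E_i+F_i))f_{\alpha,i}^{k-1}$, and read off the convergence condition mode by mode. Your explicit verification in step (iii) that $\sigma_i\cosh(\sigma_iT)+\omega_i\sinh(\sigma_iT)=\cosh(a_i)P+\sinh(a_i)Q$, giving $E_i=1+\coth(a_i)P/Q$ and $F_i=1+\tanh(a_i)Q/P$, is a correct filling-in of the algebra the paper leaves implicit.
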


In particular, 
for a zero eigenvalue, 
substituting~\eqref{eq:sob0} into~\eqref{eq:rhoNN2a},
we have 
\begin{equation}\label{eq:rhoNN2ad0}
	\begin{aligned}
		\rho_{\text{NN}_{2\text{a}}}|_{d_i=0} = \Big|1-\theta \Big(2 +  &\coth(\sqrt{\nu^{-1}}\alpha)\frac{\coth\big(\sqrt{\nu^{-1}}(T-\alpha)\big) + \gamma\sqrt{\nu^{-1}}}{1 + \gamma\sqrt{\nu^{-1}}\coth\big(\sqrt{\nu^{-1}}(T-\alpha)\big)} \\
		&+ \tanh(\sqrt{\nu^{-1}}\alpha)\frac{\tanh\big(\sqrt{\nu^{-1}}(T-\alpha)\big) + \gamma\sqrt{\nu^{-1}}}{1+ \gamma\sqrt{\nu^{-1}}\tanh\big(\sqrt{\nu^{-1}}(T-\alpha)\big)}\Big) \Big|.
	\end{aligned}
\end{equation}
For large eigenvalues $d_i$, 
using~\eqref{eq:sobinf},
we find $E_i\sim_{\infty}2$ and $F_i\sim_{\infty}2$.
Thus, 
we obtain $\lim_{d_i\rightarrow \infty} \rho_{\text{NN}_{2\text{a}}}=|1-4\theta|$,
which is independent of the interface $\alpha$. 
So the convergence for high frequencies is robust with relaxation, 
and one can get a good smoother using $\theta = 1/4$.
By equioscillating the convergence factor for small (i.e., $\rho_{\text{NN}_{2\text{a}}}|_{d_i=0}$) and large (i.e., $\rho_{\text{NN}_{2\text{a}}}|_{d_i\rightarrow\infty}$) eigenvalues, 
we obtain the relaxation parameter
\begin{equation}\label{eq:thetaNN2a}
	{\scriptstyle 
		\theta_{\text{NN}_{2\text{a}}}^*:= \frac{2}{6+ \coth(\sqrt{\nu^{-1}}\alpha)\frac{\coth(\sqrt{\nu^{-1}}(T-\alpha)) + \gamma\sqrt{\nu^{-1}}}{1 + \gamma\sqrt{\nu^{-1}}\coth(\sqrt{\nu^{-1}}(T-\alpha))} + \tanh(\sqrt{\nu^{-1}}\alpha)\frac{\tanh(\sqrt{\nu^{-1}}(T-\alpha)) + \gamma\sqrt{\nu^{-1}}}{1 + \gamma\sqrt{\nu^{-1}}\tanh(\sqrt{\nu^{-1}}(T-\alpha))}}
	},
\end{equation}
which is smaller than 1/3.
In the case with no final state, 
i.e., $\gamma=0$, 
we have 
\[\theta_{\text{NN}_{2\text{a}}}^*|_{\gamma=0}= \frac{2}{6+ \coth(\sqrt{\nu^{-1}}\alpha)\coth(\sqrt{\nu^{-1}}(T-\alpha)) + \tanh(\sqrt{\nu^{-1}}\alpha)\tanh(\sqrt{\nu^{-1}}(T-\alpha))}.\]
Using properties of the hyperbolic tangent and cotangent, 
we find 
\[\begin{aligned}
\coth(\sqrt{\nu^{-1}}\alpha)\coth(\sqrt{\nu^{-1}}(T-\alpha)) + \tanh(\sqrt{\nu^{-1}}\alpha)\tanh(\sqrt{\nu^{-1}}(T-\alpha))\geq \\
\coth^2(\sqrt{\nu^{-1}}\frac T2) + \tanh^2(\sqrt{\nu^{-1}}\frac T2) > 2,
\end{aligned}\]
thus $\theta_{\text{NN}_{2\text{a}}}^*< \frac 14$.
Based on the analysis,
algorithm NN$_{2\text{a}}$ is a good smoother and can also be a good solver.
However, 
it is less clear under what condition $\theta_{\text{NN}_{2\text{a}}}^*$ is the optimal relaxation parameter, 
since the monotonicity of the convergence factor with respect to the eigenvalues $d_i$ is not clear even in the case $\gamma=0$.
This has been observed in our numerical experiments.

\subsubsection{Algorithm NN$_{2\text{b}}$}\label{sec:NN2b}
We can also keep the Dirichlet step~\eqref{eq:NN2}, 
but apply the Neumann step only to the dual correction state $\phi_i$ as in~\eqref{eq:NN1ccor}. 
As for the update step, 
we first consider to take the same update as for algorithm NN$_{2\text{a}}$, 
i.e., \eqref{eq:NN1btran}.

For the convergence analysis, 
we actually solve a DD step~\eqref{eq:errNN2} and correct by a RR step~\eqref{eq:errNN1ccor}. 
Using~\eqref{eq:D2AB} and~\eqref{eq:N3CD}, 
we update the transmission condition~\eqref{eq:NN1btran} and find 
$f_{\alpha,i}^k=f_{\alpha,i}^{k-1} (1- \theta d_i(F_i-\nu E_i))$ with 
\[\begin{aligned}
E_i&= \frac{\sigma_i\cosh(\sigma_iT)+\omega_i\sinh(\sigma_iT) }{\sigma_i\gamma\sinh(b_i) + \beta_i\cosh(b_i)}\frac1{\sinh(a_i)},\\
F_i&=\frac{\sigma_i\cosh(\sigma_iT)+\omega\sinh(\sigma_iT)}{(\sigma_i\cosh(b_i) + \omega_i\sinh(b_i)) (\sigma_i \sinh(a_i)+d_i\cosh(a_i))}.
\end{aligned}\] 
We then obtain the convergence factor 
\begin{equation}\label{eq:rhoNN2b}
  	\rho_{\text{NN}_{2\text{b}}} :=\max_{d_i\in\lambda(A)} | 1- \theta d_i(F_i-\nu E_i) |<1.
\end{equation}

To get more insight,
we first study the extremal cases.
For a zero eigenvalue, 
$d_i=0$, 
substituting~\eqref{eq:sob0} into~\eqref{eq:rhoNN2b},
we have $(F_i-\nu E_i)|_{d_i=0}=0$.
Hence, 
we find $\rho_{\text{NN}_{2\text{b}}}|_{d_i=0} = 1$,
which is independent of the relaxation parameter.
In other words,
the convergence behavior of algorithm NN$_{2\text{b}}$ is not good for small eigenvalues, 
and the relaxation cannot fix this problem.
For large eigenvalues $d_i$, 
using~\eqref{eq:sobinf},
we find $E_i\sim_{\infty}4d_i$ and $F_i\sim_{\infty}\frac1{d_i}$.
Thus, 
we obtain $1- \theta d_i(F_i-\nu E_i)\sim_{\infty} 4\nu \theta d_i^2$ and $\lim_{d_i\rightarrow \infty} \rho_{\text{NN}_{2\text{b}}}=\infty$,
which is divergent,
and cannot be fixed with relaxation.
Generally,
we have the following result.

\begin{theorem}\label{thm:NN2b}
  Algorithm NN$_{2\text{b}}$~\eqref{eq:NN2} \eqref{eq:NN1ccor} \eqref{eq:NN1btran} always diverges.
\end{theorem}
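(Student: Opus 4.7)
The plan is to show that, for any $\theta>0$, the scalar factor $1-\theta d_i(F_i-\nu E_i)$ cannot lie strictly below $1$ in modulus at every eigenvalue of $A$ simultaneously. Since the recursion is scalar (only $f_{\alpha,i}^k$ is updated), this immediately forces $\rho_{\text{NN}_{2\text{b}}}=\max_{d_i\in\lambda(A)}|1-\theta d_i(F_i-\nu E_i)|\geq 1$ and hence divergence.

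The quickest route would exploit the asymptotic behaviour already carried out in the paragraph just before the theorem: as $d_i\to\infty$ one has $E_i\sim 4d_i$ and $F_i\sim 1/d_i$, so $d_i(F_i-\nu E_i)\sim -4\nu d_i^2$ and $1-\theta d_i(F_i-\nu E_i)\sim 1+4\nu\theta d_i^2\to +\infty$. For every fixed $\theta>0$, the map $d_i\mapsto |1-\theta d_i(F_i-\nu E_i)|$ is therefore unbounded, which already yields $\rho_{\text{NN}_{2\text{b}}}>1$ on any spectrum that is not uniformly concentrated near $0$.

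For a fully unconditional statement I would complement this with the stronger claim that $F_i-\nu E_i\leq 0$ for every $d_i\geq 0$, with strict inequality when $d_i>0$. The approach is to place $F_i$ and $\nu E_i$ over a common denominator, collect coefficients of $\sinh(a_i)\sinh(b_i)$, $\cosh(a_i)\cosh(b_i)$, $\sinh(a_i)\cosh(b_i)$, $\cosh(a_i)\sinh(b_i)$, and use the identities $\sigma_i^2=d_i^2+\nu^{-1}$, $\omega_i=d_i+\gamma\nu^{-1}$, $\beta_i=1-\gamma d_i$ together with the hyperbolic addition formulas and $a_i+b_i=\sigma_iT$. If no mistake slips in, the numerator should collapse to
\[
-d_iK\bigl[\nu\sigma_i\cosh(\sigma_iT)+(\gamma+\nu d_i)\sinh(\sigma_iT)\bigr],
\]
with $K=\sigma_i\cosh(\sigma_iT)+\omega_i\sinh(\sigma_iT)>0$, which is strictly negative for $d_i>0$. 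Then $\theta d_i(F_i-\nu E_i)<0$ for every $\theta>0$ and every positive eigenvalue, so $1-\theta d_i(F_i-\nu E_i)>1$ there and the spectral radius exceeds $1$ at every positive frequency.

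The main obstacle will be the sign of the denominator factor $\sigma_i\gamma\sinh(b_i)+\beta_i\cosh(b_i)$ coming from $E_i$: since $\beta_i=1-\gamma d_i$ becomes negative once $d_i>1/\gamma$, this factor is not obviously positive. I would handle this by a case split (for $d_i\leq 1/\gamma$ both summands are nonnegative; for larger $d_i$ the asymptotic $\gamma\sigma_i+\beta_i=1+\gamma(\sigma_i-d_i)\to 1$ shows the factor is positive at leading order in $e^{b_i}$), or, if this turns out to be messy in some parameter regime, I would retreat to the asymptotic argument above, which is insensitive to the sign of this factor and already delivers $\rho_{\text{NN}_{2\text{b}}}>1$.
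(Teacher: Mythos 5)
Your proposal is correct and takes essentially the same route as the paper: the paper also puts $F_i-\nu E_i$ over a common denominator and finds the numerator collapses to $-\nu d_i\,(\sigma_i\cosh(\sigma_iT)+\omega_i\sinh(\sigma_iT))^2$, which is exactly your $-d_iK\bigl[\nu\sigma_i\cosh(\sigma_iT)+(\gamma+\nu d_i)\sinh(\sigma_iT)\bigr]=-\nu d_iK^2$, whence $1-\theta d_i(F_i-\nu E_i)\geq 1$. The denominator-sign issue you flag for $\sigma_i\gamma\sinh(b_i)+\beta_i\cosh(b_i)$ is a genuine subtlety that the paper's proof passes over silently, and your asymptotic fallback ($\rho\sim 4\nu\theta d_i^2$ for large eigenvalues) is a sensible safeguard that the paper also invokes in the surrounding discussion.
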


\begin{proof}
	Using the formula of $E_i$ and $F_i$, 
	we find
	$F_i-\nu E_i = \frac{-\nu d_i}{\sigma_i \sinh(a_i)+d_i\cosh(a_i)}$ $\frac{(\sigma_i\cosh(\sigma_iT)+\omega_i\sinh(\sigma_iT))^2}{\sinh(a_i)(\sigma_i\gamma\sinh(b_i) + \beta_i\cosh(b_i))(\sigma_i\cosh(b_i) + \omega_i\sinh(b_i))}$
	which is negative or zero (if $d_i=0$).
	Since $\theta$ and $\nu$ are both positive, 
	$1- \theta d_i(F_i-\nu E_i)\geq 1$ which concludes the proof.
\end{proof}

The above result shows that algorithm NN$_{2\text{b}}$ diverges with a positive relaxation parameter $\theta$.
Moreover,
this divergence cannot be fixed even with a negative $\theta$,
since the convergence factor is one for a zero eigenvalue,
and is equivalent to $4\nu |\theta| d_i^2$ for large eigenvalues.
In general, 
algorithm NN$_{2\text{b}}$ is neither a good smoother nor a good solver.

\begin{remark}\label{rem:NN2b}
	Compared with algorithm NN$_{2\text{a}}$, 
	we change the Neumann step but keep the same update step.
	One can also consider the update step~\eqref{eq:NN1ctran},
	since the Neumann correction~\eqref{eq:NN1ccor} is only applied to the dual correction state $\phi_i$.
	Following the same computation,
	the convergence factor~\eqref{eq:rhoNN2b} then becomes
\[\max_{d_i\in\lambda(A)} | 1- \theta (E_i-\nu^{-1} F_i) |,\]
with $E_i-\nu^{-1} F_i\geq0$.
	However,
	this does not change the poor convergence behavior for both small and large eigenvalues.
	Indeed, 
	we still have $(E_i-\nu^{-1} F_i)|_{d_i=0}=0$,
	hence $\rho_{\text{NN}_{2\text{b}}}|_{d_i=0} = 1$,
	and $\lim_{d_i\rightarrow \infty} \rho_{\text{NN}_{2\text{b}}}=\infty$.
	Thus,
	the modified algorithm stays divergent.
	Furthermore,
	for a similar reason as mentioned in Appendix~\ref{sec:app1},
	the algorithm is also divergent when considering the update step~\eqref{eq:NN1atran} with a pair of transmission conditions $(f_{\alpha,i}^k, g_{\alpha,i}^k)$. 
\end{remark}

Based on the analysis,
we cannot find a good NN algorithm when combining the Dirichlet step~\eqref{eq:NN2} with the Neumann step~\eqref{eq:NN1ccor}.

\subsubsection{Algorithm NN$_{2\text{c}}$}\label{sec:NN2c}
If we apply the correction to the pair $(\psi_i,\phi_i)$,
then the Neumann step immediately has the forward-backward structure. 
In this way, 
algorithm NN$_{2\text{c}}$ solves first the Dirichlet step~\eqref{eq:NN2}, 
next the Neumann step~\eqref{eq:NN1acor} and updates the transmission condition by~\eqref{eq:NN1btran}.

For the convergence analysis, 
we solve a DD step~\eqref{eq:errNN2} followed by a RN correction step~\eqref{eq:errNN1acor}. 
Using~\eqref{eq:D2AB} and~\eqref{eq:N1CD}, 
we update the transmission condition~\eqref{eq:NN1btran} and find 
$f_{\alpha,i}^k=f_{\alpha,i}^{k-1} (1- \theta (E_i + d_iF_i))$ with 
\[\begin{aligned}
E_i&= \frac{\sigma_i\cosh(\sigma_iT)+\omega_i\sinh(\sigma_iT)}{(\sigma_i\sinh(b_i) + \omega_i\cosh(b_i))\sinh(a_i)},\\
F_i&=\frac{\sigma_i\cosh(\sigma_iT)+\omega_i\sinh(\sigma_iT)}{(\sigma_i\cosh(b_i) + \omega_i\sinh(b_i))(\sigma_i \sinh(a_i)+d_i\cosh(a_i))}.
\end{aligned}\] 
We then obtain the following result. 

\begin{theorem}
  Algorithm NN$_{2\text{c}}$~\eqref{eq:NN2}, \eqref{eq:NN1acor}, \eqref{eq:NN1btran} converges if and only if
  \begin{equation}\label{eq:rhoNN2c}
  	\rho_{\text{NN}_{2\text{c}}} :=\max_{d_i\in\lambda(A)} | 1- \theta (E_i+d_i F_i) |<1.
   \end{equation}
\end{theorem}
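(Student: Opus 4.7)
The plan is to exploit the fact that, in the iteration \eqref{eq:NN1btran}, only one scalar quantity $f_{\alpha,i}^{k-1}$ is transmitted between iterations, so for each eigenvalue $d_i$ the update is a scalar recurrence $f_{\alpha,i}^k = \rho_i f_{\alpha,i}^{k-1}$. Convergence then amounts to showing $|\rho_i|<1$ for every $d_i \in \lambda(A)$, and identifying $\rho_i$ as $1 - \theta(E_i + d_i F_i)$ with the stated $E_i, F_i$.

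First I would work in the transformed (second-order) formulation already used throughout Section~\ref{sec:analysis}: the Dirichlet step \eqref{eq:NN2} is equivalent to the DD system \eqref{eq:errNN2}, and the Neumann correction \eqref{eq:NN1acor} is equivalent to the RN system \eqref{eq:errNN1acor}. Plugging the ansatz \eqref{eq:errNNsol} into \eqref{eq:errNN2} gives the coefficients $A_i^k, B_i^k$ already recorded in \eqref{eq:D2AB} (Section~\ref{sec:NN2a}). Next I would reuse the formulas \eqref{eq:N1CD}, derived in the NN$_{1\text{a}}$ analysis, to read off $C_i^k, D_i^k$ as explicit linear combinations of $A_i^k, B_i^k$: the Neumann step only involves the traces and derivatives of $z_{1,i}^k, z_{2,i}^k$ at $\alpha$, so the dependence on $A_i^k, B_i^k$ is exactly as in NN$_{1\text{a}}$, independently of how the Dirichlet step was set up.

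I would then evaluate $\psi_{1,i}^k(\alpha) + \psi_{2,i}^k(\alpha) = C_i^k\sinh(a_i) + D_i^k(\sigma_i\cosh(b_i) + \omega_i\sinh(b_i))$ using \eqref{eq:errNNsol}, substitute \eqref{eq:N1CD} and then \eqref{eq:D2AB}, so the whole expression becomes $f_{\alpha,i}^{k-1}$ times a rational function of $\sigma_i, d_i, \omega_i, \beta_i, a_i, b_i$. The target is to reorganize this rational function as $E_i + d_i F_i$ with the explicit $E_i, F_i$ stated right before the theorem. The $E_i$ piece will come from the $A_i^k\sinh(a_i)$ and $B_i^k(\sigma_i\cosh(b_i)+\omega_i\sinh(b_i))$ contributions, which combine through the identity $\sigma_i\cosh(\sigma_i T) + \omega_i\sinh(\sigma_i T) = \sinh(a_i)(\sigma_i\cosh(b_i)+\omega_i\sinh(b_i)) + \cosh(a_i)(\sigma_i\sinh(b_i)+\omega_i\cosh(b_i))$, while the $d_i F_i$ piece will emerge from the cross terms involving the fraction $\frac{\sigma_i\gamma\sinh(b_i)+\beta_i\cosh(b_i)}{\sigma_i\sinh(a_i)+d_i\cosh(a_i)}$ after using $\sigma_i^2 = d_i^2 + \nu^{-1}$ from \eqref{eq:sob}.

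Inserting the resulting identity into the update \eqref{eq:NN1btran} yields the scalar iteration $f_{\alpha,i}^k = \bigl(1 - \theta(E_i + d_i F_i)\bigr) f_{\alpha,i}^{k-1}$. Since the Fourier modes are decoupled, the error in the full algorithm tends to zero if and only if this scalar recurrence is contracting uniformly over the spectrum, which is precisely \eqref{eq:rhoNN2c}. The main obstacle will be the algebraic identification of the amplification factor in the precise form $E_i + d_i F_i$: several terms with different denominators must be brought to a common form and simplified using the hyperbolic addition formulas and the relations in \eqref{eq:sob}; the computation is routine but bookkeeping-heavy, exactly as in the previous theorems of Section~\ref{sec:cv}.
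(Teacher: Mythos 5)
Your proposal is correct and follows essentially the same route as the paper: interpret the Dirichlet step \eqref{eq:NN2} as the DD problem \eqref{eq:errNN2} with coefficients \eqref{eq:D2AB}, reuse the RN-correction coefficients \eqref{eq:N1CD} (which indeed depend only on the traces of $z_{1,i}^k,z_{2,i}^k$ at $\alpha$, hence carry over from the NN$_{1\text{a}}$ analysis), and reduce the update \eqref{eq:NN1btran} to the scalar recurrence $f_{\alpha,i}^k=(1-\theta(E_i+d_iF_i))f_{\alpha,i}^{k-1}$ on each decoupled mode. The algebraic identities you flag (the hyperbolic addition formula for $\sigma_iT=a_i+b_i$ and $\sigma_i^2=d_i^2+\nu^{-1}$) are exactly the ones needed to land on the stated $E_i$ and $F_i$, so the sketch closes.
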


For a zero eigenvalue $d_i=0$, 
substituting the identities~\eqref{eq:sob0} into~\eqref{eq:rhoNN2c},
we find 
\begin{equation}\label{eq:rhoNN2cd0}
	\rho_{\text{NN}_{2\text{c}}}|_{d_i=0} = \Big|1-\theta\big(1+\coth(\sqrt{\nu^{-1}}\alpha)\frac{\coth(\sqrt{\nu^{-1}}(T-\alpha))+\gamma\sqrt{\nu^{-1}}}{1+\gamma\sqrt{\nu^{-1}}\coth(\sqrt{\nu^{-1}}(T-\alpha))}\big)\Big|.
\end{equation}
For large eigenvalues $d_i$, 
using~\eqref{eq:sobinf},
we find $E_i\sim_{\infty}2$ and $F_i\sim_{\infty}\frac1{d_i}$.
Thus, 
we obtain $\lim_{d_i\rightarrow \infty} \rho_{\text{NN}_{2\text{c}}}=|1-3\theta|$,
which is independent of the interface $\alpha$. 
So the convergence for high frequencies is robust with relaxation, 
and one can get a good smoother using $\theta = 1/3$.
By equioscillating the convergence factor for small (i.e., $\rho_{\text{NN}_{2\text{c}}} |_{d_i=0}$) and large (i.e., $\rho_{\text{NN}_{2\text{c}}} |_{d_i\rightarrow\infty}$) eigenvalues, 
we obtain
\begin{equation}\label{eq:thetaNN2c}
	\theta^*_{\text{NN}_{2\text{c}}} := \frac{2}{4+\coth(\sqrt{\nu^{-1}}\alpha)\frac{\coth(\sqrt{\nu^{-1}}(T-\alpha))+\gamma\sqrt{\nu^{-1}}}{1+\gamma\sqrt{\nu^{-1}}\coth(\sqrt{\nu^{-1}}(T-\alpha))}},
\end{equation}
which is smaller than 1/2. 
In the case $\gamma=0$, 
the relaxation parameter $\theta^*_{\text{NN}_{2\text{c}}}$ is bounded by 2/5.
However, 
it is also not clear under what condition $\theta^*_{\text{NN}_{2\text{c}}}$ is the optimal relaxation parameter,
since the monotonicity of $E_i+d_i F_i$ with respect to $d_i$ is less clear, and depends on the parameter values $\alpha$, $\gamma$ and $\nu$.
Generally,
algorithm NN$_{2\text{c}}$ is both a good smoother and a good solver with a well-chosen $\theta$.

\begin{remark}\label{rem:NN2c}
	Instead of choosing~\eqref{eq:NN1btran} as the update step,
	one could have considered the update step~\eqref{eq:NN1ctran}.
	Following the same computation,
	the convergence factor becomes $\max_{d_i\in\lambda(A)} | 1- \theta (d_iE_i-\nu^{-1} F_i) |$,
	which diverges for large eigenvalues.
	Furthermore,
	the algorithm will also be divergent when considering the update step~\eqref{eq:NN1atran} with a pair transmission conditions $(f_{\alpha,i}^k, g_{\alpha,i}^k)$ as mentioned in Appendix~\ref{sec:app1}.
\end{remark}

\subsection{Category III}
The algorithms in Category III run the Dirichlet step only on the dual state $\mu_{i}$, 
and according to the Neumann step, 
there are three variants.

\subsubsection{Algorithm NN$_{3\text{a}}$}
As in Section~\ref{sec:NN2a}, 
the most natural way is to correct the dual state $\mu_{i}$ only by the dual correction state $\phi_{i}$. 
In this way,
for $k=1,2,...$, 
algorithm NN$_{3\text{a}}$ first solves the Dirichlet step
\begin{equation}\label{eq:NN3}
\begin{aligned}
  &\left\{
    \begin{aligned}
      \begin{pmatrix}
        \dot z_{1,i}^k\\
        \dot \mu_{1,i}^k
      \end{pmatrix}
      +
      \begin{pmatrix}
        d_i & -\nu^{-1}\\
        -1 & -d_i
      \end{pmatrix}
      \begin{pmatrix}
        z_{1,i}^k\\
        \mu_{1,i}^k
      \end{pmatrix}
      &=
      \begin{pmatrix}
        0\\
        0
      \end{pmatrix} \text{ in } \Omega_1,\\
      z_{1,i}^k(0) &= 0,\\
      \mu_{1,i}^k(\alpha) & = f_{\alpha,i}^{k-1},
    \end{aligned}
  \right.\\
  &\left\{
    \begin{aligned}
      \begin{pmatrix}
        \dot z_{2,i}^k\\
        \dot \mu_{2,i}^k
      \end{pmatrix}
      +
      \begin{pmatrix}
        d_i & -\nu^{-1}\\
        -1 & -d_i
      \end{pmatrix}
      \begin{pmatrix}
        z_{2,i}^k\\
        \mu_{2,i}^k
      \end{pmatrix}
      &=
      \begin{pmatrix}
        0\\
        0
      \end{pmatrix} \text{ in } \Omega_2,\\
      \mu_{2,i}^k(\alpha) &= f_{\alpha,i}^{k-1},\\
      \mu_{2,i}^k(T) + \gamma z_{2,i}^k(T) &= 0,
    \end{aligned}
  \right.
   \end{aligned}
\end{equation}
then corrects the above result by solving the Neumann step~\eqref{eq:NN1ccor}, 
and updates the transmission condition by~\eqref{eq:NN1ctran}. 

Similar to Remark~\ref{rem:update}, 
we choose here the update step~\eqref{eq:NN1ctran} because of the continuity of the dual state $\mu_i^k$ at the interface $\alpha$,
since other choices of the update step will induce divergence behavior.
Regarding the forward-backward structure for the Dirichlet step~\eqref{eq:NN3}, 
we can recover it by interpreting $\mu_{2,i}^k(\alpha) = f_{\alpha,i}^{k-1}$ as $\dot z_{2,i}^k(\alpha) + d_i z_{2,i}^k(\alpha) = f_{\alpha,i}^{k-1}$.
The Dirichlet step~\eqref{eq:NN3} then becomes a NR step.

To analyze algorithm NN$_{3\text{a}}$, 
we can rewrite the Dirichlet step~\eqref{eq:NN3} using~\eqref{eq:4id} and~\eqref{eq:sob}, 
and find
\begin{equation}\label{eq:errNN3}
 \left\{
    \begin{aligned}
      \ddot z_{1,i}^k - \sigma_i^2 z_{1,i}^k &= 0 \text{ in } \Omega_1,\\
      z_{1,i}^k(0) &= 0, \\
      \dot z_{1,i}^k(\alpha) + d_i z_{1,i}^k(\alpha) &= f_{\alpha,i}^{k-1},
    \end{aligned}
  \right.
  \quad
  \left\{
    \begin{aligned}
      \ddot z_{2,i}^k - \sigma_i^2 z_{2,i}^k &= 0 \text{ in } \Omega_2,\\
      \dot z_{2,i}^k(\alpha) + d_i z_{2,i}^k(\alpha) &= f_{\alpha,i}^{k-1},\\
      \dot z_{2,i}^k(T) + \omega_iz_{2,i}^k(T)&= 0.
    \end{aligned}
  \right.
\end{equation}
We then correct the above RR step by a RR correction~\eqref{eq:errNN1ccor}, 
which is also the equivalent of the Neumann step~\eqref{eq:NN1ccor}.
And the update step~\eqref{eq:NN1ctran} becomes
\begin{equation}\label{eq:NN3atran}
    f_{\alpha,i}^k = f_{\alpha,i}^{k-1} - \theta \big(\dot \psi_{1,i}^k(\alpha) + d_i \psi_{1,i}^k(\alpha)+\dot \psi_{2,i}^k(\alpha) + d_i \psi_{2,i}^k(\alpha)\big).
\end{equation} 
Using~\eqref{eq:errNNsol}, 
we can solve explicitly~\eqref{eq:errNN3} and determine the coefficients
\begin{equation}\label{eq:D3AB}
	A_i^k = \frac{f_{\alpha,i}^{k-1}}{\sigma_i \cosh(a_i) + d_i \sinh(a_i)}, \quad B_i^k = -\nu\frac{f_{\alpha,i}^{k-1}}{\sigma_i\gamma\cosh(b_i) + \beta_i\sinh(b_i)}.
\end{equation}
Combining with~\eqref{eq:N3CD}, 
we update the transmission condition~\eqref{eq:NN3atran} and obtain
$f_{\alpha,i}^k = f_{\alpha,i}^{k-1}- \theta f_{\alpha,i}^{k-1}(E_i+F_i)$
with 
\[\begin{aligned}
E_i= \frac{\sigma_i\cosh(\sigma_iT) + \omega_i\sinh(\sigma_iT)}{\sigma_i\gamma\sinh(b_i) + \beta_i\cosh(b_i)}\frac1{\sigma_i \cosh(a_i) + d_i \sinh(a_i)},\\
F_i=\frac{\sigma_i\cosh(\sigma_iT)+\omega\sinh(\sigma_iT)}{\sigma_i\gamma\cosh(b_i) + \beta_i\sinh(b_i)}\frac1{\sigma_i \sinh(a_i)+d_i\cosh(a_i)}.
\end{aligned}\] 
Thus, 
we have the following result.

\begin{theorem}
  Algorithm NN$_{3\text{a}}$~\eqref{eq:NN3}, \eqref{eq:NN1ccor}, \eqref{eq:NN1ctran} converges if and only if
  \begin{equation}\label{eq:rhoNN3a}
  	\rho_{\text{NN}_{3\text{a}}} :=\max_{d_i\in\lambda(A)} | 1- \theta (E_i+F_i) |<1.
  \end{equation}
\end{theorem}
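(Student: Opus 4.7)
\medskip

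\noindent\textbf{Proof proposal.}
The plan is to reduce the three-step algorithm to a scalar linear recurrence on the transmission datum $f_{\alpha,i}^{k-1}$, after which the convergence criterion follows from the elementary fact that a scalar iteration $x^k=r x^{k-1}$ converges to zero iff $|r|<1$, independently for each eigenvalue $d_i\in\lambda(A)$.

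First, I would exploit that the Dirichlet step~\eqref{eq:errNN3} and the correction step~\eqref{eq:errNN1ccor} share the generic ansatz~\eqref{eq:errNNsol}. The coefficients $A_i^k,B_i^k$ are already identified in~\eqref{eq:D3AB}, and the Robin-Robin transmission on $\psi_{j,i}^k$ in~\eqref{eq:errNN1ccor} yields $C_i^k,D_i^k$ as in~\eqref{eq:N3CD}. Both $C_i^k$ and $D_i^k$ are linear combinations of $A_i^k$ and $B_i^k$, and hence, via~\eqref{eq:D3AB}, linear in $f_{\alpha,i}^{k-1}$. Substituting $\psi_{j,i}^k$ into the Robin quantities appearing in the update step~\eqref{eq:NN3atran}, I would evaluate
\[
\dot \psi_{1,i}^k(\alpha) + d_i \psi_{1,i}^k(\alpha) = C_i^k\bigl(\sigma_i\cosh(a_i)+d_i\sinh(a_i)\bigr),
\]
and, using $-\sigma_i^2+d_i\omega_i=-\nu^{-1}\beta_i$ and $d_i-\omega_i=-\gamma\nu^{-1}$ coming from~\eqref{eq:sob},
\[
\dot \psi_{2,i}^k(\alpha) + d_i \psi_{2,i}^k(\alpha) = -\nu^{-1}D_i^k\bigl(\sigma_i\gamma\cosh(b_i)+\beta_i\sinh(b_i)\bigr).
\]

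Next, I would substitute $C_i^k$ and $D_i^k$ from~\eqref{eq:N3CD} and then $A_i^k,B_i^k$ from~\eqref{eq:D3AB}. The four resulting cross contributions should collapse via the hyperbolic addition identity
\[
\sigma_i\cosh(\sigma_iT)+\omega_i\sinh(\sigma_iT) = \cosh(a_i)\bigl[\sigma_i\cosh(b_i)+\omega_i\sinh(b_i)\bigr]+\sinh(a_i)\bigl[\sigma_i\sinh(b_i)+\omega_i\cosh(b_i)\bigr],
\]
and its companion with $\omega_i$ replaced by the $\gamma,\beta_i$-weighted counterparts, so that the two diagonal contributions reproduce the advertised $E_i$ and $F_i$ while the off-diagonal contributions cancel. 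This will give precisely $f_{\alpha,i}^k=\bigl(1-\theta(E_i+F_i)\bigr) f_{\alpha,i}^{k-1}$.

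Finally, since the spatial diagonalization decouples the dynamics across eigenvalues and the iteration is scalar on each component, the algorithm converges for every initial error iff the scalar amplification factor $1-\theta(E_i+F_i)$ lies strictly inside the unit disk for every $d_i\in\lambda(A)$; taking the maximum in $d_i$ then yields the claimed characterization~\eqref{eq:rhoNN3a}. The main obstacle is the bookkeeping in the last step of the algebraic simplification: four hyperbolic terms (two from $C_i^k$, two from $D_i^k$, each split by $A_i^k,B_i^k$) must be regrouped, and it is the cancellation of the $A_i^k B_i^k$-cross terms via the addition formula above that makes the final factor separate cleanly into $E_i+F_i$ rather than a more complicated rational expression, as was the case in~\eqref{eq:rhoNN1a} for the genuinely two-dimensional iteration of algorithm NN$_{1\text{a}}$.
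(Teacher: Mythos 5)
Your proposal is correct and follows essentially the same route as the paper: rewrite the Dirichlet and Neumann steps as~\eqref{eq:errNN3} and~\eqref{eq:errNN1ccor}, read off $A_i^k,B_i^k$ from~\eqref{eq:D3AB} and $C_i^k,D_i^k$ from~\eqref{eq:N3CD}, evaluate the Robin traces in the update~\eqref{eq:NN3atran}, and conclude from the resulting scalar recurrence $f_{\alpha,i}^k=\bigl(1-\theta(E_i+F_i)\bigr)f_{\alpha,i}^{k-1}$ on each decoupled eigencomponent. One small correction to your predicted bookkeeping: the cross terms do \emph{not} cancel; writing $P=\sigma_i\cosh(a_i)+d_i\sinh(a_i)$, $Q=\sigma_i\sinh(a_i)+d_i\cosh(a_i)$, $R=\sigma_i\gamma\sinh(b_i)+\beta_i\cosh(b_i)$, $S=\sigma_i\gamma\cosh(b_i)+\beta_i\sinh(b_i)$, the two diagonal contributions each give exactly $1$ and the two cross contributions give $RP/(QS)$ and $QS/(RP)$, and it is the perfect square $2+RP/(QS)+QS/(RP)=(RP+QS)^2/(RP\,QS)$ together with the addition identity $RP+QS=\sigma_i\cosh(\sigma_iT)+\omega_i\sinh(\sigma_iT)$ (which uses $\gamma d_i+\beta_i=1$ and $\sigma_i^2\gamma+\beta_i d_i=\omega_i$) that collapses all four terms into $E_i+F_i$.
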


We consider some special cases to get more insight in the convergence factor~\eqref{eq:rhoNN3a}.
Assuming no final target (i.e., $\gamma=0$) and a symmetric decomposition $\alpha=\frac T2$ (i.e., $a_i=b_i$),
we find that $E_i$ and $F_i$ are actually the same as for algorithm NN$_{2\text{a}}$ in Section~\ref{sec:NN2a}.
Hence, 
the convergence factor~\eqref{eq:rhoNN3a} is as~\eqref{eq:rhoNN2a} under this assumption, 
and NN$_{2\text{a}}$ and NN$_{3\text{a}}$ are actually the same algorithm.
Moreover, 
for a zero eigenvalue, 
substituting~\eqref{eq:sob0} into~\eqref{eq:rhoNN3a},
we find exactly the same formula as~\eqref{eq:rhoNN2ad0}.
Thus, 
the two algorithms NN$_{2\text{a}}$ and NN$_{3\text{a}}$ share the same behavior for small eigenvalues.
On the other hand,
using~\eqref{eq:sobinf} for large eigenvalues $d_i$,
we find $E_i\sim_{\infty} 2$ and $F_i\sim_{\infty} 2$.
This implies that $\lim_{d_i\rightarrow \infty} \rho_{\text{NN}_{3\text{a}}} = |1-4\theta|$,
which is the same as for algorithm NN$_{2\text{a}}$.
Once again, 
the two algorithms NN$_{2\text{a}}$ and NN$_{3\text{a}}$ share the same behavior for large eigenvalues.
Hence,
we obtain the same relaxation parameter $\theta_{\text{NN}_{3\text{a}}}^*=\theta_{\text{NN}_{2\text{a}}}^*$ as defined in~\eqref{eq:thetaNN2a}.
In general, 
algorithm NN$_{3\text{a}}$ seems to be very similar to NN$_{2\text{a}}$, 
and we could also expect it to be a good smoother and solver.

\subsubsection{Algorithm NN$_{3\text{b}}$}
The second variant in Category III consists in applying the Neumann step to the primal correction state $\psi_i$. 
In this way, 
we consider the algorithm that first solves the Dirichlet step~\eqref{eq:NN3}, 
followed by the Neumann step~\eqref{eq:NN1bcor},
and updates the transmission condition by~\eqref{eq:NN1ctran}.

For the convergence analysis, 
we solve a RR step~\eqref{eq:errNN3} and correct by a NN step~\eqref{eq:errNN1bcor}. 
Using~\eqref{eq:D3AB} and~\eqref{eq:N2CD},
we can update the transmission condition~\eqref{eq:NN3atran} and find
$f_{\alpha,i}^k = f_{\alpha,i}^{k-1}-f_{\alpha,i}^{k-1}\theta d_i(E_i-\nu F_i)$
with 
\[\begin{aligned}
E_i&= \frac{\sigma_i\cosh(\sigma_iT)+\omega_i\sinh(\sigma_iT)}{(\sigma_i\sinh(b_i)+ \omega_i\cosh(b_i))(\sigma_i \cosh(a_i) + d_i \sinh(a_i))},\\
F_i&=\frac{\sigma_i\cosh(\sigma_iT)+\omega\sinh(\sigma_iT)}{(\sigma_i\gamma\cosh(b_i)+\beta_i\sinh(b_i))\cosh(a_i)}. 
\end{aligned}\] 
This leads to the convergence factor
\begin{equation}\label{eq:rhoNN3b}
  	\rho_{\text{NN}_{3\text{b}}} :=\max_{d_i\in\lambda(A)} | 1- \theta d_i(E_i-\nu F_i) |<1.
  \end{equation}

We first study the extreme cases.
For a zero eigenvalue, 
substituting the identities~\eqref{eq:sob0} into~\eqref{eq:rhoNN3b},
we find $(E_i-\nu F_i)|_{d_i=0} = 0$,
and hence $\rho_{\text{NN}_{3\text{b}}}|_{d_i=0} = 1$.
This is once again independent of the relaxation parameter.
In other words,
the convergence of this algorithm is not good for small eigenvalues, 
and the relaxation cannot fix this problem.
For large eigenvalues $d_i$, 
using~\eqref{eq:sobinf},
we find $E_i\sim_{\infty}\frac1{d_i}$ and $F_i\sim_{\infty}4d_i$.
Thus, 
we obtain $\rho_{\text{NN}_{3\text{b}}}\sim_{\infty} 4\nu \theta d_i^2$ and $\lim_{d_i\rightarrow \infty} \rho_{\text{NN}_{3\text{b}}}=\infty$,
which is divergent and cannot be fixed with relaxation.
In general,
we have the following result.

\begin{theorem}
  Algorithm NN$_{3\text{b}}$~\eqref{eq:NN3}, \eqref{eq:NN1bcor}, \eqref{eq:NN1ctran} always diverges.
\end{theorem}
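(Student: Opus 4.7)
The plan is to mirror the argument used for Theorem~\ref{thm:NN2b}: since all eigenvalues $d_i$ of the Laplacian discretization are strictly positive, the claim $\rho_{\text{NN}_{3\text{b}}}\geq 1$ will follow once I show that $d_i(E_i-\nu F_i)$ has a definite sign for every $d_i\in\lambda(A)$. I therefore aim to factor the combination $E_i-\nu F_i$ explicitly, using the abbreviations in~\eqref{eq:sob}, into a form from which the sign is manifest.

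First, I would write $E_i-\nu F_i=\frac{N_i(D_F-\nu D_E)}{D_E D_F}$, where $N_i:=\sigma_i\cosh(\sigma_iT)+\omega_i\sinh(\sigma_iT)$ is the common numerator and $D_E,D_F$ are the two denominators appearing in the formulas for $E_i,F_i$. Both $D_E$ and $D_F$ are clearly positive for $d_i>0$, so only the sign of $D_F-\nu D_E$ has to be analysed. Expanding the products and applying the product-to-sum identities to rewrite every $\cosh(a_i)\cosh(b_i),\sinh(a_i)\cosh(b_i),\ldots$ in terms of $\cosh(\sigma_iT),\sinh(\sigma_iT),\cosh(b_i-a_i),\sinh(b_i-a_i)$ should collapse the difference, because the $(b_i-a_i)$-terms are expected to cancel by design, just as they did in Theorem~\ref{thm:NN2b}. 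Collecting coefficients and using the identities $\nu\sigma_i^2=\nu d_i^2+1$, $\nu\omega_i=\nu d_i+\gamma$ and $\beta_i=1-\gamma d_i$ from~\eqref{eq:sob} should yield a factorisation of the form
\begin{equation*}
  E_i-\nu F_i\;=\;-\,\frac{\nu d_i\,N_i^{2}}{D_E\,D_F},
\end{equation*}
which is non-positive, and strictly negative whenever $d_i>0$.

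Given this factorisation, the conclusion is immediate: for every eigenvalue $d_i>0$ and every $\theta>0$ one has $-\theta d_i(E_i-\nu F_i)>0$, hence $|1-\theta d_i(E_i-\nu F_i)|=1-\theta d_i(E_i-\nu F_i)>1$, so $\rho_{\text{NN}_{3\text{b}}}>1$. Divergence with $\theta<0$ is then obtained by combining this sign information with the asymptotic behaviour already established in the excerpt, namely $d_i(E_i-\nu F_i)\sim_\infty-4\nu d_i^2$, which drives the convergence factor to $+\infty$ regardless of the sign of $\theta$.

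The main obstacle I expect is the bookkeeping in the expansion of $D_F-\nu D_E$: six hyperbolic products have to be linearised and then grouped correctly, and the cancellation of the $\cosh(b_i-a_i)$ and $\sinh(b_i-a_i)$ contributions (which is what makes the final factor $N_i^2$ appear) depends on the precise interplay of $\beta_i$, $\omega_i$ and $\sigma_i^2$. If that cancellation fails to be clean, I would fall back to the weaker but still sufficient route of proving the limit behaviour at $d_i\to\infty$ by expanding $E_i$ and $F_i$ to two asymptotic orders, which already forces $\rho_{\text{NN}_{3\text{b}}}\to\infty$ and hence divergence for any fixed relaxation parameter.
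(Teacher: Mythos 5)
Your proposal is correct and follows essentially the same route as the paper: the paper's proof simply says to mirror the argument of Theorem~\ref{thm:NN2b} and show that $E_i-\nu F_i$ is negative or zero, and your explicit computation does exactly that — the expansion of $D_F-\nu D_E$ using $\nu\sigma_i^2=\nu d_i^2+1$, $\nu\omega_i=\nu d_i+\gamma$ and $\beta_i=1-\gamma d_i$ indeed collapses to $-\nu d_i\big(\sigma_i\cosh(\sigma_iT)+\omega_i\sinh(\sigma_iT)\big)$, giving $E_i-\nu F_i=-\nu d_i N_i^2/(D_ED_F)$ as you predicted. The only caveat (shared with the paper, which also asserts the sign without justification) is that the positivity of the factor $\sigma_i\gamma\cosh(b_i)+\beta_i\sinh(b_i)$ in $D_F$ is not entirely "clear" since $\beta_i=1-\gamma d_i$ can be negative, but this does not distinguish your argument from the paper's.
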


\begin{proof}
	Following the same idea as in the proof of Theorem~\ref{thm:NN2b},
	we can show that $E_i-\nu F_i$ is always negative or zero, 
	and this concludes the proof.  
\end{proof}

\begin{remark}
	One could have also applied a similar strategy as in Remark~\ref{rem:NN2b},
	that is, 
	considering the update step~\eqref{eq:NN1btran} instead of~\eqref{eq:NN1ctran}.
	The convergence factor~\eqref{eq:rhoNN3b} then becomes
	$\max_{d_i\in\lambda(A)} | 1- \theta (E_i-\nu F_i) |$.
	Once again,
	this does not change the poor convergence behavior for both small and large eigenvalues.
\end{remark}

Similar to algorithm NN$_{2\text{b}}$, 
algorithm NN$_{3\text{b}}$ is neither a good smoother nor a good solver,
and other choices of the update step will not change this.
Together with Section~\ref{sec:NN2b},
we observe that, 
applying the Dirichlet step to the primal state $z_i$ (resp. dual state $\mu_i$) and correcting the result by a Neumann step to the dual correction state $\phi_i$ (resp. primal correction state $\psi_i$),
will lead to divergent algorithms,
and cannot be fixed even by adapting the update step.

\subsubsection{Algorithm NN$_{3\text{c}}$}
The last variant consists in applying the Neumann step to the pair $(\psi_i,\phi_i)$. 
In this way, 
the NN$_{3\text{b}}$ algorithm solves first the Dirichlet step~\eqref{eq:NN3}, 
next the Neumann step~\eqref{eq:NN1acor} which also has the forward-backward structure.
Then it updates the transmission condition by~\eqref{eq:NN1ctran}.

For the convergence analysis, 
we solve a RR step~\eqref{eq:errNN3} followed by a NR correction~\eqref{eq:errNN1acor}. 
Using~\eqref{eq:D3AB} and~\eqref{eq:N1CD}, 
we update the transmission condition~\eqref{eq:NN3atran} and find
$f_{\alpha,i}^k = f_{\alpha,i}^{k-1} (1- \theta(d_iE_i+F_i))$
with 
\[\begin{aligned}
E_i= \frac{\sigma_i\cosh(\sigma_iT)+\omega_i\sinh(\sigma_iT)}{(\sigma_i\sinh(b_i)+\omega_i\cosh(b_i))(\sigma_i \cosh(a_i) + d_i \sinh(a_i))},\\
F_i=\frac{\sigma_i\cosh(\sigma_iT)+\omega\sinh(\sigma_iT)}{(\sigma_i\gamma\cosh(b_i)+\beta_i\sinh(b_i))(\sigma_i \sinh(a_i)+d_i\cosh(a_i))}.
\end{aligned}\] 
We thus find the following result.

\begin{theorem}
	Algorithm NN$_{3\text{c}}$~\eqref{eq:NN3}, \eqref{eq:NN1acor}, \eqref{eq:NN1ctran} converges if and only if
	\begin{equation}\label{eq:rhoNN3c}
  		\rho_{\text{NN}_{3\text{c}}} :=\max_{d_i\in\lambda(A)} | 1- \theta (d_iE_i+F_i) |<1.
	\end{equation}
\end{theorem}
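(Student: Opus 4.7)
The plan is to mirror the recipe used for algorithms NN$_{1\text{a}}$ through NN$_{3\text{b}}$: express every iterate in closed form through the general solution~\eqref{eq:errNNsol}, propagate through the Dirichlet and Neumann steps, and reduce the update~\eqref{eq:NN3atran} to a scalar recurrence in $f_{\alpha,i}^{k-1}$. Since this is a modal analysis, $d_i$ is fixed and the ODEs are all scalar; the matrix $A$ only reappears at the very end when we take the maximum over $\lambda(A)$.

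First, I would solve the RR Dirichlet step~\eqref{eq:errNN3}. The ansatz~\eqref{eq:errNNsol} already satisfies the homogeneous ODE, the initial condition $z_{1,i}^k(0)=0$, and the terminal condition $\dot z_{2,i}^k(T)+\omega_i z_{2,i}^k(T)=0$. Enforcing the two Robin transmission conditions $\dot z_{j,i}^k(\alpha)+d_i z_{j,i}^k(\alpha)=f_{\alpha,i}^{k-1}$ pins down $A_i^k$ and $B_i^k$; one obtains exactly~\eqref{eq:D3AB}. Next, for the Neumann correction, which is the system~\eqref{eq:NN1acor} already analyzed for NN$_{1\text{a}}$, the initial/terminal structure is the same, so inserting the new Robin-type transmission values at $\alpha$ produces $C_i^k$ and $D_i^k$ in the form~\eqref{eq:N1CD}, with $A_i^k$, $B_i^k$ now replaced by~\eqref{eq:D3AB}.

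Then I would evaluate the update step~\eqref{eq:NN3atran}: differentiate $\psi_{j,i}^k$ from~\eqref{eq:errNNsol} at $t=\alpha$, form the combinations $\dot\psi_{j,i}^k(\alpha)+d_i\psi_{j,i}^k(\alpha)$, and collect. Both contributions are linear in $f_{\alpha,i}^{k-1}$ alone, because the Dirichlet step has only one transmission scalar. Regrouping coefficients yields the scalar recursion $f_{\alpha,i}^k=\bigl(1-\theta(d_iE_i+F_i)\bigr)f_{\alpha,i}^{k-1}$ with $E_i$ and $F_i$ as in the statement; the identification with the stated expressions is the routine but careful algebraic step and reuses the factorizations that already appeared in algorithms NN$_{2\text{c}}$ and NN$_{3\text{a}}$.

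Finally, the algorithm converges iff $f_{\alpha,i}^k\to 0$ for every mode $i$. Since the recurrence is scalar, this happens iff $|1-\theta(d_iE_i+F_i)|<1$ for each $d_i\in\lambda(A)$, which is equivalent to $\rho_{\text{NN}_{3\text{c}}}<1$ as defined in~\eqref{eq:rhoNN3c}. The main obstacle is not conceptual—the structure is identical to the earlier theorems—but lies in the bookkeeping: one must verify that the substitution of~\eqref{eq:D3AB} into~\eqref{eq:N1CD} followed by the Robin combination at $\alpha$ really collapses to a rank-one multiplier of $f_{\alpha,i}^{k-1}$, with no residual cross-term that would have required a pair $(f_{\alpha,i}^k,g_{\alpha,i}^k)$ (as discussed in Remark~\ref{rem:update} and Appendix~\ref{sec:app1}). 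The simplification works precisely because both transmissions in~\eqref{eq:errNN1acor} drive $\psi_{j,i}^k(\alpha)$ through the same Robin combination used in~\eqref{eq:NN3atran}, so the linearity in $f_{\alpha,i}^{k-1}$ is preserved throughout.
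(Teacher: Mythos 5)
Your proposal is correct and follows essentially the same route as the paper: solve the RR Dirichlet step~\eqref{eq:errNN3} to obtain the coefficients~\eqref{eq:D3AB}, feed them into the Neumann-correction coefficients~\eqref{eq:N1CD}, and observe that the update~\eqref{eq:NN3atran} collapses to the scalar recurrence $f_{\alpha,i}^k=\bigl(1-\theta(d_iE_i+F_i)\bigr)f_{\alpha,i}^{k-1}$, whence convergence is equivalent to $\rho_{\text{NN}_{3\text{c}}}<1$. Your closing remark about why no $(f_{\alpha,i}^k,g_{\alpha,i}^k)$ cross-term survives is exactly the point the paper relies on implicitly, since the Dirichlet step carries a single transmission scalar.
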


We consider some special cases to get more insight.
Assuming no final target (i.e., $\gamma=0$) and a symmetric decomposition $\alpha=\frac T2$ (i.e., $a_i=b_i$),
we find that $E_i$ is actually the same as the $F_i$ for algorithm NN$_{2\text{c}}$, 
and $F_i$ is the same as the $E_i$ for algorithm NN$_{2\text{c}}$ in Section~\ref{sec:NN2c}.
Hence, 
NN$_{2\text{c}}$ and NN$_{3\text{c}}$ are the same algorithm under this assumption.
For a zero eigenvalue, 
$d_i=0$, 
substituting the identities~\eqref{eq:sob0} into~\eqref{eq:rhoNN3c},
we find 
$\rho_{\text{NN}_{3\text{c}}}|_{d_i=0} =\rho_{\text{NN}_{2\text{c}}}|_{d_i=0}$ as in~\eqref{eq:rhoNN2c}.
In other words,
algorithms NN$_{2\text{c}}$ and NN$_{3\text{c}}$ have a similar behavior for small eigenvalues.
For large eigenvalues $d_i$, 
using~\eqref{eq:sobinf},
we find $E_i\sim_{\infty}\frac1{d_i}$ and $F_i\sim_{\infty}2$.
Thus, 
we obtain $\lim_{d_i\rightarrow \infty} \rho_{\text{NN}_{3\text{c}}}=|1-3\theta|$,
which is independent of the interface $\alpha$. 
So the convergence for large eigenvalues is robust with relaxation, 
and one can get a good smoother using $\theta = 1/3$.
Furthermore,
we find again similar behavior between algorithms NN$_{2\text{c}}$ and NN$_{3\text{c}}$ for large eigenvalues.
Using hence equioscillation, 
we obtain $\theta^*_{\text{NN}_{3\text{c}}}=\theta^*_{\text{NN}_{2\text{c}}}$ as defined in~\eqref{eq:thetaNN2c}.
Based on all these similarities with algorithm NN$_{2\text{c}}$,
algorithm NN$_{3\text{c}}$ is also a good smoother and solver.
Also for a similar reason as explained in Remark~\ref{rem:NN2c},
other choices of the update step will lead to divergent behavior.

\section{Numerical results}\label{sec:test}
We illustrate now our nine new time domain decomposition algorithms with numerical experiments. 
As mentioned in the convergence analysis,
some algorithms are much more sensitive to the chosen parameters than others.
To well illustrate and compare these algorithms,
we consider two different test cases,
\begin{itemize}
	\item[\textbf{case A:}] The time interval $\Omega = (0,1)$ is subdivided into $\Omega_1=(0,0.5)$, $\Omega_2=(0.5,1)$ (i.e., symmetric),
	and the objective function has no explicit final target term ($\gamma= 0$). 
	The regularization parameter is $\nu = 0.1$.
	\item[\textbf{case B:}] The time interval $\Omega = (0,5)$ is subdivided into $\Omega_1=(0,1)$, $\Omega_2=(1,5)$ (i.e., asymmetric),
	and the objective function has a final target term with $\gamma= 10$. 
	The regularization parameter is $\nu = 10$.
\end{itemize} 
For each test,
we will investigate the performance by plotting the convergence factor as a function of the eigenvalues $d_i \in [10^{-2}, 10^2 ]$.

\subsection{Convergence factor of NN$_{2\text{b}}$ and NN$_{3\text{b}}$}
We first illustrate the behavior of NN$_{2\text{b}}$ and NN$_{3\text{b}}$ separately, 
since their convergence analyses are very similar, 
and both algorithms are divergent.
Figure~\ref{fig:div_NN2b_NN3b} shows the behavior of the convergence factor as a function of the eigenvalues for these two algorithms.
\begin{figure}
	\centering
	\includegraphics[scale=0.25]{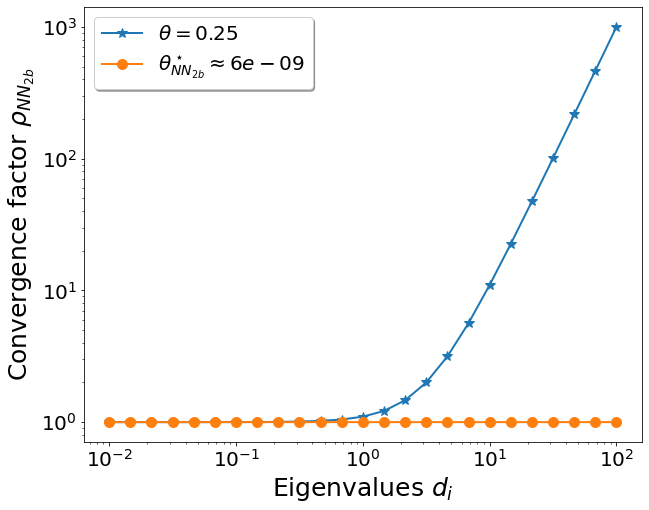}
	\includegraphics[scale=0.25]{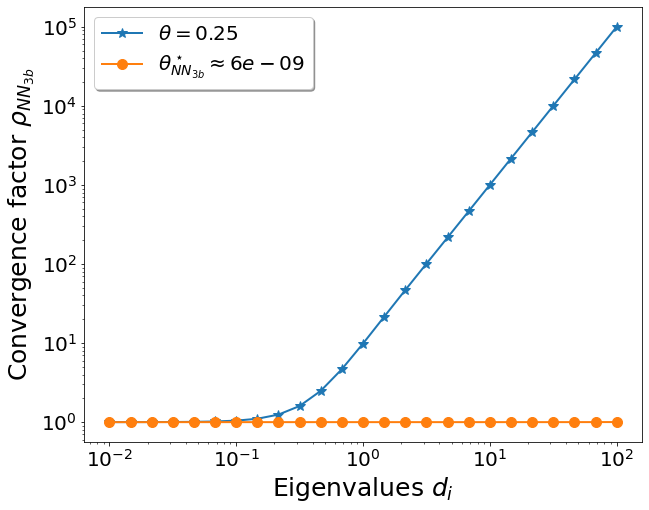}
	\caption{Convergence factor with $\theta=0.25$ of NN$_{2\text{b}}$ and NN$_{3\text{b}}$ as a function of the eigenvalues $d_i \in [10^{-2}, 10^2 ]$.
	Left: case A for NN$_{2\text{b}}$.
	Right: case B for NN$_{3\text{b}}$.}
	\label{fig:div_NN2b_NN3b}
\end{figure}
More precisely,
both algorithms diverge in the case $\theta=0.25$.
And for both test cases A and B,
the two algorithms diverge violently for large eigenvalues with the scale of 10$^3$ for NN$_{2\text{b}}$ and 10$^5$ for NN$_{3\text{b}}$. 
This corresponds to our estimate $4\nu\theta d_i^2$.
By applying optimization\footnote{We use in this paper the optimization toolbox \textit{scipy.optimize.fmin} in python.},
we find the optimal relaxation parameter is approximately zero for both algorithms in the test cases.
As shown in our analysis,
the best one can do is to choose $\theta=0$ to compensate the bad large eigenvalue behavior,
yet the algorithms are still divergent.
Note that NN$_{2\text{b}}$ and NN$_{3\text{b}}$ in the case $\theta=0$ are actually a classical Schwarz type algorithm,
which does not converge without overlap.
Therefore,
NN$_{2\text{b}}$ and NN$_{3\text{b}}$ are not good algorithms and cannot be improved with relaxation.

\subsection{Convergence factor of NN$_{1\text{a}}$ with different $\theta$}
The second test is dedicated to the most natural Neumann-Neumann algorithm NN$_{1\text{a}}$.
Based on our analysis, 
NN$_{1\text{a}}$ is only a good smoother but not a good solver.
Therefore,
we choose some different relaxation parameters $\theta$ and show the behavior of the convergence factor as a function of the eigenvalues in Figure~\ref{fig:NN1a}.
\begin{figure}
\centering
\includegraphics[scale=0.25]{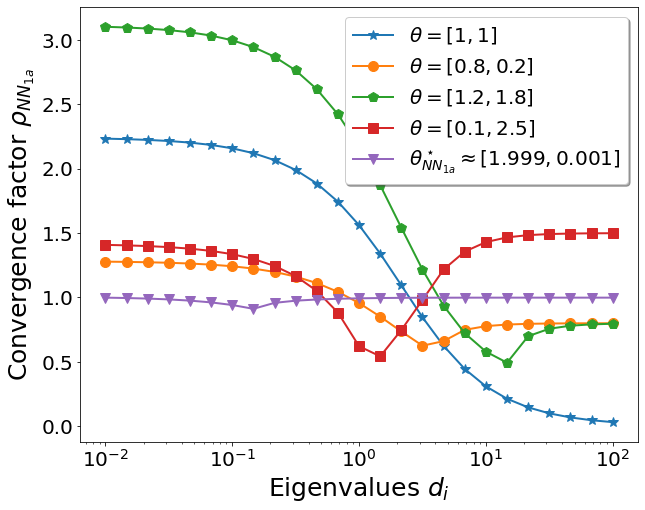}
\includegraphics[scale=0.25]{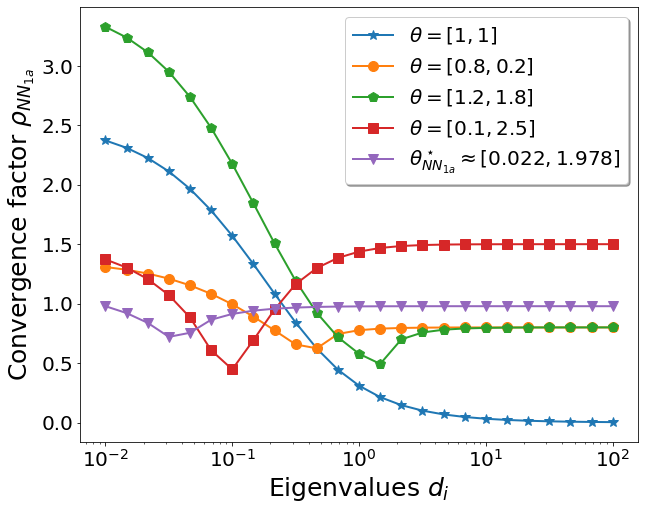}
\caption{Convergence factor with different relaxation parameters $\theta$ of NN$_{1\text{a}}$ as a function of the eigenvalues $d_i \in [10^{-2}, 10^2 ]$.
	Left: case A.
	Right: case B.}
\label{fig:NN1a}
\end{figure}
For both test cases A and B,
NN$_{1\text{a}}$ has similar behavior for the tested parameters $\theta$.
In the case $\theta=[0.8,0.2]$ and $\theta=[1.2,1.8]$,
the convergence behavior is the same for large eigenvalues.
Indeed, 
our analysis shows that $\lim_{d_i\rightarrow\infty} \rho_{\text{NN}_{1\text{a}}} = \{|1-\theta_1|,|1-\theta_2|\}$,
and in this case equals to 0.8 for both $\theta$.
Furthermore,
we observe that NN$_{1\text{a}}$ is a good smoother with the choice $\theta=[1,1]$. 
By using optimization,
we find that the optimal relaxation parameter has the form that one goes to zero and the other one goes to two, 
yet with a poor convergence.
Therefore,
NN$_{1\text{a}}$ can be a good smoother but not a good solver.

\subsection{Convergence factor with $\theta=1/2$}
We now focus on the remaining six algorithms NN$_{1\text{b}}$, NN$_{1\text{c}}$, NN$_{2\text{a}}$, NN$_{2\text{c}}$, NN$_{3\text{a}}$ and NN$_{3\text{c}}$.
Based on our analysis,
all six algorithms have shown the potentiel of being a good solver,
we thus compare them with a given relaxation parameter $\theta=1/2$ in two test cases.
Figure~\ref{fig:cv6} shows the behavior of the convergence factor as a function of the eigenvalues for the six algorithms.
\begin{figure}
\centering
\includegraphics[scale=0.25]{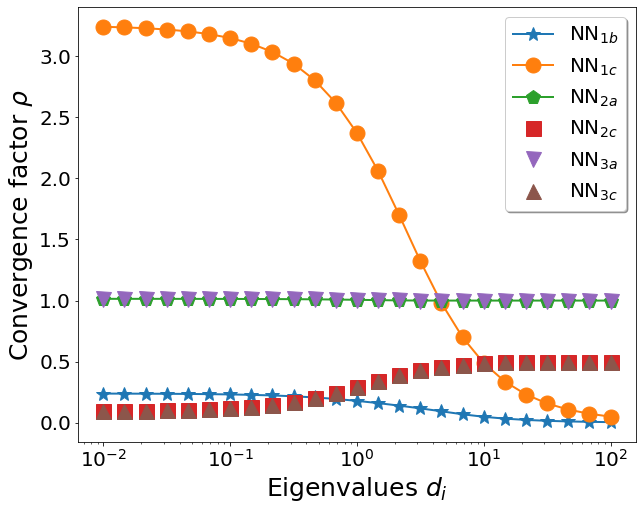}
\includegraphics[scale=0.25]{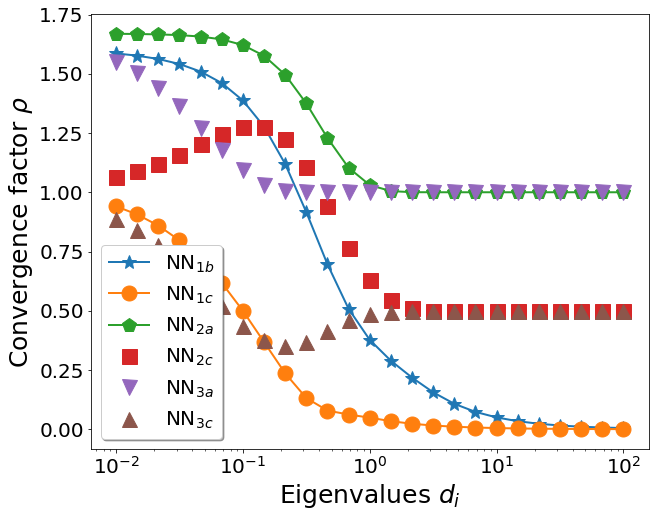}
\caption{Convergence factor with $\theta=1/2$ of the six algorithms as a function of the eigenvalues $d_i \in [10^{-2}, 10^2 ]$.
	Left: case A.
	Right: case B.}
\label{fig:cv6}
\end{figure}
In case A, 
we observe that NN$_{2\text{a}}$ and NN$_{3\text{a}}$ have identical behavior,
and similar for NN$_{2\text{c}}$ and NN$_{3\text{c}}$.
Indeed,
as explained in our analysis,
the convergence factors are the same in case A for NN$_{2\text{a}}$ and NN$_{3\text{a}}$,
and also for NN$_{2\text{c}}$ and NN$_{3\text{c}}$.
Furthermore,
NN$_{1\text{b}}$ and NN$_{1\text{c}}$ have similar behavior for large eigenvalues,
which has also been pointed out in our analysis.
And as expected, 
these two algorithms are good smoothers with $\theta=1/2$.
In particular,
NN$_{1\text{b}}$ outperforms the other five algorithms in case A,
that is both a good smoother and solver.
However,
this changes in case B.
More precisely, 
NN$_{2\text{a}}$ and NN$_{3\text{a}}$ have rather a symmetric behavior, 
as well as NN$_{2\text{c}}$ and NN$_{3\text{c}}$. 
And as shown in our analysis,
both NN$_{2\text{a}}$ and NN$_{3\text{a}}$ have the same behavior for large eigenvalues, 
and also NN$_{2\text{c}}$ and NN$_{3\text{c}}$.
Moreover,
NN$_{1\text{b}}$ and NN$_{1\text{c}}$ are both good smoothers,
and NN$_{1\text{c}}$ has a better performance than NN$_{1\text{b}}$ this time.

\subsection{Convergence factor with optimal $\theta$}
We then show the convergence behavior of each algorithm using their optimal relaxation parameter $\theta^{\star}$ determined by optimization.
Figure~\ref{fig:optcv6} shows the behavior of the convergence factor as a function of the eigenvalues for the six algorithms.
\begin{figure}
\centering
\includegraphics[scale=0.25]{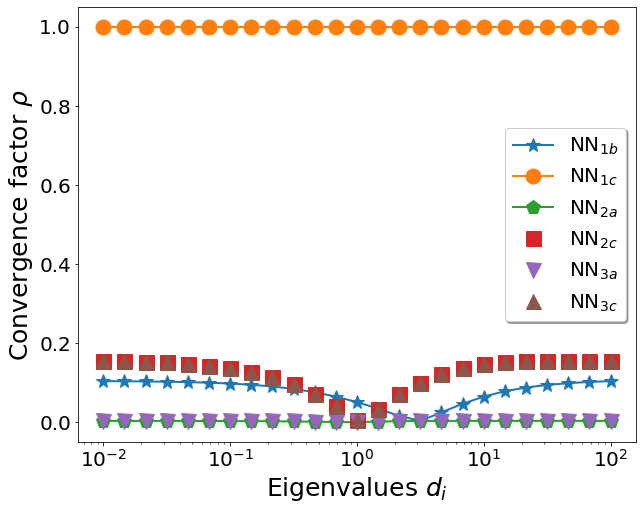}
\includegraphics[scale=0.25]{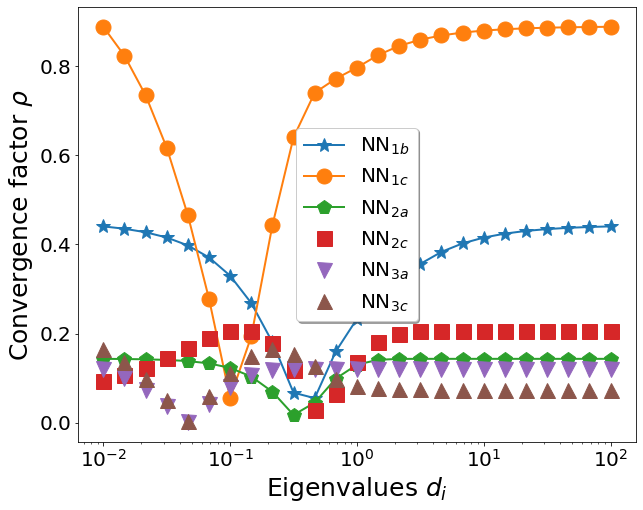}
\caption{Convergence factor with optimal relaxation parameter $\theta^{\star}$ of the six algorithms as a function of the eigenvalues $d_i \in [10^{-2}, 10^2 ]$.
	Left: case A.
	Right: case B.}
\label{fig:optcv6}
\end{figure}
In case A, 
NN$_{2\text{a}}$ and NN$_{3\text{a}}$ have once again identical behavior.
Indeed,
their convergence factors are the same in case A,
and both NN$_{2\text{a}}$ and NN$_{3\text{a}}$ have the same optimal relaxation parameter $\theta^{\star}_{\text{NN}_{2\text{a}}} =\theta^{\star}_{\text{NN}_{3\text{a}}}$, 
which corresponds to the theoretical value $\theta^*_{\text{NN}_{2\text{a}}}\approx 0.249$ as determined by~\eqref{eq:thetaNN2a}.
For the same reason, 
we observe the same behavior for NN$_{2\text{c}}$ and NN$_{3\text{c}}$,
where the optimal relaxation parameter $\theta^{\star}_{\text{NN}_{2\text{c}}} =\theta^{\star}_{\text{NN}_{3\text{c}}}=\theta^*_{\text{NN}_{2\text{c}}}\approx 0.385$ as determined by~\eqref{eq:thetaNN2c}.
As for NN$_{1\text{b}}$,
we find that the optimal relaxation parameter $\theta^{\star}_{\text{NN}_{1\text{b}}} =\theta^*_{\text{NN}_{1\text{b}}}\approx 0.446$ as determined by~\eqref{eq:thetaNN1b}.
However,
the optimal relaxation parameter for NN$_{1\text{c}}$ is $\theta^{\star}_{\text{NN}_{1\text{c}}}\approx0$, 
which cannot be determined by~\eqref{eq:thetaNN1c}.
As explained in our analysis,
the term $E_i-\nu^{-1}F_i$ in~\eqref{eq:rhoNN1c} is negative in case A,
thus the best option is to choose $\theta=0$ which becomes then a Schwarz type algorithm without overlap. 
In general,
all algorithms except NN$_{1\text{c}}$ have very good performance in case A,
and both NN$_{2\text{a}}$ and NN$_{3\text{a}}$ outperform the others with a
convergence factor around 10$^{-3}$.
Once again,
the behavior of the six algorithms becomes much different in case B.
While NN$_{1\text{c}}$ diverges in case A,
it converges in the test case B with the optimal relaxation parameter $\theta^{\star}_{\text{NN}_{1\text{c}}}=\theta^*_{\text{NN}_{1\text{c}}}\approx0.944$ as determined by~\eqref{eq:thetaNN1c}.
NN$_{1\text{b}}$ rather keeps a similar performance with the optimal relaxation parameter $\theta^{\star}_{\text{NN}_{1\text{b}}} =\theta^*_{\text{NN}_{1\text{b}}}\approx 0.278$ as determined by~\eqref{eq:thetaNN1b}.
NN$_{2\text{a}}$ and NN$_{3\text{a}}$ also have the same optimal relaxation parameter $\theta^{\star}_{\text{NN}_{2\text{a}}} =\theta^{\star}_{\text{NN}_{3\text{a}}}=\theta^*_{\text{NN}_{2\text{a}}}\approx 0.214$ as determined by~\eqref{eq:thetaNN2a}.
However,
for NN$_{2\text{c}}$ and NN$_{3\text{c}}$,
the optimal relaxation parameter of $\theta^{\star}_{\text{NN}_{2\text{c}}}\approx0.265$ is rather different from $\theta^{\star}_{\text{NN}_{3\text{c}}}\approx0.307$,
and both are different from the value determined by~\eqref{eq:thetaNN2c} using equioscillation $\theta^*_{\text{NN}_{2\text{c}}}\approx0.285$.
Indeed,
NN$_{2\text{c}}$ rather equioscillates the convergence value between large eigenvalues with some eigenvalue in the interval $[0.1,1]$,
whereas NN$_{3\text{c}}$ equioscillates the convergence value between small eigenvalues with some eigenvalue in the interval $[0.1,1]$.
In general,
all six algorithms converge in case B, 
NN$_{2\text{a}}$ and NN$_{3\text{a}}$ still outperform the others with NN$_{3\text{a}}$ slightly better than NN$_{2\text{a}}$.

\section{Conclusion}\label{sec:conclusion}
We introduced and analyzed nine new time domain decomposition methods based on Neumann-Neumann techniques for parabolic optimal control problems.
Our analysis shows that the Neumann correction step and the update step should be well-adjusted to the Dirichlet step to avoid potential divergence.
Moreover,
while it seems natural at first glance to preserve the forward-backward structure in the time subdomains as well, 
there are better choices that lead to substantially faster algorithms which can still be identified to be of forward-backward structure using changes of variables.
We also found many interesting mathematical connections between these algorithms, 
for instance the algorithms in Categories II and III have rather similar convergence behavior.
In terms of the performance,
NN$_{2\text{b}}$ and NN$_{3\text{b}}$ are bad algorithms,
the most natural algorithm NN$_{1\text{a}}$ is rather a good smoother,
and NN$_{2\text{a}}$ and NN$_{3\text{a}}$ with optimized relaxation parameter are much faster than the other algorithms and can be considered as highly efficient solvers.

Our study was restricted to the two subdomain case, 
but the algorithms can all naturally be written for many subdomains, 
and then one can also run them in parallel. 
They can also be used for more general parabolic constraints than the heat equation. 
Extensive numerical results will appear elsewhere.

\bibliographystyle{plain}
\bibliography{references}

\appendix

\section{Pair transmission conditions}\label{sec:app1}

Let us consider a modified algorithm NN$_{2\text{a}}$,
that is,
we first solve the Dirichlet step
\[
\begin{aligned}
  &\left\{
    \begin{aligned}
      \begin{pmatrix}
        \dot z_{1,i}^k\\
        \dot \mu_{1,i}^k
      \end{pmatrix}
      +
      \begin{pmatrix}
        d_i & -\nu^{-1}\\
        -1 & -d_i
      \end{pmatrix}
      \begin{pmatrix}
        z_{1,i}^k\\
        \mu_{1,i}^k
      \end{pmatrix}
      &=
      \begin{pmatrix}
        0\\
        0
      \end{pmatrix} \text{ in } \Omega_1,\\
      z_{1,i}^k(0) &= 0,\\
      z_{1,i}^k(\alpha) & = f_{\alpha,i}^{k-1},
    \end{aligned}
  \right.\\
  &\left\{
    \begin{aligned}
      \begin{pmatrix}
        \dot z_{2,i}^k\\
        \dot \mu_{2,i}^k
      \end{pmatrix}
      +
      \begin{pmatrix}
        d_i & -\nu^{-1}\\
        -1 & -d_i
      \end{pmatrix}
      \begin{pmatrix}
        z_{2,i}^k\\
        \mu_{2,i}^k
      \end{pmatrix}
      &=
      \begin{pmatrix}
        0\\
        0
      \end{pmatrix} \text{ in } \Omega_2,\\
      z_{2,i}^k(\alpha) &= g_{\alpha,i}^{k-1},\\
      \mu_{2,i}^k(T) + \gamma z_{2,i}^k(T) &= 0,
    \end{aligned}
  \right.
   \end{aligned}
\]
and then correct the result by the Neumann step
\[\begin{aligned}
  &\left\{
    \begin{aligned}
      \begin{pmatrix}
        \dot \psi_{1,i}^k\\
        \dot \phi_{1,i}^k
      \end{pmatrix}
      +
      \begin{pmatrix}
        d_i & -\nu^{-1} \\
        -1 & -d_i
      \end{pmatrix}
      \begin{pmatrix}
        \psi_{1,i}^k\\
        \phi_{1,i}^k
      \end{pmatrix}
      &=
      \begin{pmatrix}
        0\\
        0
      \end{pmatrix} \text{ in } \Omega_1,\\
      \psi_{1,i}^k(0) &= 0,\\
      \dot\psi_{1,i}^k(\alpha) & = \dot z_{1,i}^k(\alpha) - \dot z_{2,i}^k(\alpha),
    \end{aligned}
  \right.\\
  &\left\{
    \begin{aligned}
      \begin{pmatrix}
        \dot \psi_{2,i}^k\\
        \dot \phi_{2,i}^k
      \end{pmatrix}
      +
      \begin{pmatrix}
        d_i & -\nu^{-1}\\
        -1 & -d_i
      \end{pmatrix}
      \begin{pmatrix}
        \psi_{2,i}^k\\
        \phi_{2,i}^k
      \end{pmatrix}
      &=
      \begin{pmatrix}
        0\\
        0
      \end{pmatrix} \text{ in } \Omega_2,\\
      \dot\psi_{2,i}^k(\alpha) &= \dot z_{2,i}^k(\alpha) - \dot z_{1,i}^k(\alpha),\\
      \phi_{2,i}^k(T) + \gamma \psi_{2,i}^k(T) &= 0.
    \end{aligned}
  \right.
\end{aligned}\]
and update the transmission condition by
\[
  f_{\alpha,i}^k:=f_{\alpha,i}^{k-1} - \theta_1 \big(\psi_{1,i}^k(\alpha) + \psi_{2,i}^k(\alpha)\big), \quad g_{\alpha,i}^k:=g_{\alpha,i}^{k-1} - \theta_2 \big(\psi_{1,i}^k(\alpha) + \psi_{2,i}^k(\alpha)\big),
\]
with $\theta_1,\theta_2>0$. 
Following the same analysis as in Section~\ref{sec:NN2a},
we find,
\[
  \begin{pmatrix}
    f_{\alpha,i}^k\\
    g_{\alpha,i}^k
  \end{pmatrix} = \begin{pmatrix}
    1-\theta_1 E_i & -\theta_1 F_i\\
    -\theta_2E_i & 1-\theta_2F_i
  \end{pmatrix}\begin{pmatrix}
    f_{\alpha,i}^{k-1}\\
    g_{\alpha,i}^{k-1}
  \end{pmatrix}.
\]
In particular, the eigenvalues of the iteration matrix are 1 and $1-(\theta_1 E_i + \theta_2 F_i )$. 
Thus,
the modified algorithm NN$_{2\text{a}}$ does not converge in this form.
This divergence still stays even by considering the update step~\eqref{eq:NN1atran} for the pair transmission conditions.
More generally, 
we have the same behavior for NN$_{2\text{b}}$, NN$_{2\text{c}}$, NN$_{3\text{a}}$, NN$_{3\text{b}}$ and NN$_{3\text{c}}$,
if we keep a pair of transmission conditions $(f_{\alpha,i}^k, g_{\alpha,i}^k)$.
\end{document}